\renewcommand{\k}{\Bbbk}
\newcommand{\ku}{\Bbbk}
\newcommand{\nc}{\newcommand}
\newcommand{\ot}{\otimes}
\newcommand{\ydh}{{}^{H}_{H}\mathcal{YD}}
\newcommand{\fk}{\mathcal{FK}}
\newcommand{\rg}{\rangle}
\renewcommand{\lg}{\langle}
\newcommand{\ra}{\rightharpoonup}
\newcommand{\dcopro}{\operatorname{dcopro}}
\renewcommand{\d}{\operatorname{d}}
\newcommand{\e}{\operatorname{e}}
\newcommand{\car}{\operatorname{char}}
\newcommand{\Hom}{\operatorname{Hom}}
\newcommand{\ex}{\operatorname{Exp}}
\newcommand{\id}{\operatorname{id}}
\newcommand{\sgn}{\operatorname{sign}}
\renewcommand{\sl}{\mathfrak{sl}}
\newcommand{\Cleft}{\operatorname{Cleft}}
\renewcommand{\mod}{\operatorname{-mod}}
\newcommand{\BB}{\mathbb{B}}
\newcommand{\N}{\mathbb{N}}
\newcommand{\s}{\mathbb{S}}
\newcommand{\G}{\mathbb{G}}
\newcommand{\I}{\mathbb{I}}
\newcommand{\mE}{\mathcal{E}}
\renewcommand{\P}{\mathrm{P}}
\newcommand{\Bq}{\mathfrak{B}}
\renewcommand{\O}{\mathcal{O}}
\newcommand{\cc}{{\bf c}}
\newcommand{\ul}{\underline{\lambda}}
\newcommand{\gap}{\texttt{GAP}}
\newcommand{\eps}{\epsilon}
\newcommand{\veps}{\varepsilon}
\def\pf{\begin{proof}}
\def\epf{\end{proof}}
\def\bs{\boldsymbol}
\newcommand{\Z}{\operatorname{Z}}
\newcommand{\B}{\operatorname{B}}
\newcommand{\C}{\operatorname{C}}
\newcommand{\Dchaintwo}[3]{\xymatrix@C-4pt{\overset{#1}{\underset{\ }{\circ}}\ar
		@{-}[r]^{#2}
		& \overset{#3}{\underset{\ }{\circ}}}}
\nc{\ben}{\begin{enumerate}[(i)]}
\nc{\een}{\end{enumerate}}
\def\trid{\vartriangleright}
\numberwithin{equation}{section}
\theoremstyle{plain}
\newtheorem{theorem}{Theorem}[section]
\newtheorem{lemma}[theorem]{Lemma}
\newtheorem{proposition}[theorem]{Proposition}
\newtheorem{corollary}[theorem]{Corollary}
\newtheorem{question}[theorem]{Question}
\newtheorem*{theoremone}{Theorem A}
\newtheorem*{theoremtwo}{Theorem B}
\theoremstyle{remark}
\newtheorem{remark}[theorem]{Remark}
\newtheorem*{acknowledgement*}{Acknowledgement}
\title[Hopf cocycles over $\s_3$]{Hopf cocycles associated to pointed and copointed deformations over $\s_3$}
\author{Agust\'in Garc\'ia Iglesias}
\address{A.G.I, J.I.S: FaMAF-CIEM (CONICET),
	Medina Allende S/N,
	Universidad Nacional de C\'ordoba,
	Ciudad Universitaria, C\'ordoba (X5000HUA),
	República Argentina. }
\email{agustingarcia@unc.edu.ar, jose.ignacio.sanchez@mi.unc.edu.ar}
\author{José Ignacio Sánchez}
\begin{document}

\begin{abstract}
The algebras in the classification of finite dimensional pointed and copointed Hopf algebras over the symmetric group on three letters are all Hopf cocycle deformations of their associated graded objects. We show that these cocycles are generically pure, namely they are not co-homologous to exponentials of Hochschild 2-cocycles; apart from very specific examples.

As well, we provide a full description of these cocycles on a given basis.
\end{abstract}

\maketitle

\section{Introduction}\label{sec:intro}

We follow the ideas developed and collected in \cite{GS} to write down explicitly the Hopf 2-cocycles involved in the classification results of pointed and copointed Hopf algebras over the symmetric group $\s_3$. We show that these cocycles cannot be generically obtained as exponentials of Hochschild 2-cocycles, which justifies this explicit computation for a better understanding of these objects. We refer the reader to {\it loc.cit.}~for a thorough introduction to this program.  As well, 

Finite-dimensional pointed and copointed Hopf algebras over $\s_3$ arise as deformations of the Fomin-Kirillov algebra $\fk_3$ over $\k\s_3$ and $\k^{\s_3}$, respectively. Recall that $\fk_3$ denotes the quadratic algebra generated by variables $x_0,x_1,x_2$ and relations
\begin{align}\label{eqn:fk3-intro}
	x_0^2=x_1^2=x_2^2&=0, & 
	x_0x_1+x_1x_2+x_2x_0=x_1x_0+x_2x_1+x_0x_2&=0.
\end{align}

Set $V=\k\{x_0,x_1,x_2\}$, then this is indeed the Nichols algebra associated to the braided vector space $(V,c)$ where the braiding $c\colon V\ot V\to V\ot V$ is determined by:
\begin{align}\label{eqn:braiding-intro}
	c(x_i\ot x_j)=-x_{i\trid j}\ot x_i, \quad i\trid j\coloneqq 2i-j \, (3);
\end{align}
or equivalently to the dual braiding $c'$ see \eqref{eqn:bvs2}. 
As such, it admits realizations on $\ydh$, both for $H=\k\s_3$ and $H=\k^{\s_3}$.
In the pointed case we  consider the more general case of Hopf algebras with coradical $H=\k\G_{3,\ell}$, for
\begin{equation}\label{eqn:g3}
	\G_{3,\ell}=C_3\rtimes C_{2\ell}=\lg s,t | s^3=t^{2\ell}=1, ts=s^2t\rg,
\end{equation}
for which $\s_3\simeq \G_{3,1}$. Here $C_n$ denotes the cyclic group of order $n\in\N$.

This is a 12-dimensional graded algebra; an homogeneous basis is given by:
\begin{align}\begin{split}\label{eqn:basis}
		\mathbb{B}=\{1,x_0,x_1,x_2,x_0x_1,& x_0x_2,x_1x_0,x_1x_2,\\ & x_0x_1x_0,x_0x_1x_2,x_1x_0x_2,x_0x_1x_0x_2\}.
	\end{split}
\end{align}
We refer to \S\ref{sec:basis} for some remarks on the choice of the basis $\BB$.

Following the strategy settled in \cite{AAGMV}, pointed, respectively copointed, Hopf algebras with coradical $H$, are classified in terms of Hopf cocycle deformations $A_\sigma$ of $A\coloneqq\fk_3\# H$. However, the cocycles involved are not known, as this method presents the algebras $A_\sigma$ by generators and relations, see \S\ref{sec:method}. More precisely they arise as left Hopf algebras associated to certain families of cleft objects. 

A key feature of this classification, related to the present work, is that the presentation of the algebras $A_\sigma$ can be very obscure and intricate; hence the explicit knowledge of the cocycles $\sigma$ can provide an alternative to find these presentations, as the multiplication $m_\sigma\colon A_\sigma\ot A_\sigma\to A_\sigma$ is the convolution conjugation $m_\sigma=\sigma\ast m\ast \sigma^{-1}$ of the multiplication $m\colon A\ot A\to A$ of $A$ by $\sigma$, namely:
\begin{align}\label{eqn:mult-def}
	m_\sigma(a,b)=\sigma(a_{(1)},b_{(2)})m(a_{(2)},b_{(2)})\sigma^{-1}(a_{(3)},b_{(3)}), \qquad a,b\in A.
\end{align}

\subsection{Pure cocycles}
There is an alternative way to look for Hopf cocycles for a given Hopf algebra $A$, in terms of Hochschild 2-cocycles with trivial coefficients $\eta\in \Z^2(A,\k)$: Under certain hypotheses, the exponential
$e^\eta=\sum\limits_{k\geq 0}\frac{1}{k!}\eta^{\ast\,k}\in Z^2(A)$.

We say that a Hopf 2-cocycle $\sigma$ is called {\it pure} if it is not cohomologous to an exponential of a Hochschild 2-cocycle, {\it cf.}~\cite[Definition 5.2]{GS}.
Our results show that this is indeed the generic situation, as very restricting conditions are necessary for a Hopf cocycle to be an exponential in the examples.

\subsection{Results}

We refer to \S\ref{sec:pointed-def} and \S\ref{sec:copointed-def} for the description of the Hopf algebras belonging to each classification, namely the pointed Hopf algebras $A_{\lambda,\mu}$, $\lambda,\mu\in\k$, and the copointed Hopf algebras $A_{[\bs\alpha]}$, $\bs\alpha=(\alpha_1,\alpha_2)\in\k^2$. We set $\ul\coloneqq \dfrac{\lambda}{3}$.

We present here a first approach to our results characterizing the cocycles involved in the deformations. 
We recall from our {\it decomposition formula} in \cite[Lemma 3.2]{GS}, see also \eqref{eqn:dec-form2}, that a Hopf cocycle can be reconstructed from the values $\sigma(x_i,b)$, $b\in \mathbb{B}$, $i\in \{0,1,2\}$. We exhibit here these fundamental values and single out the pure cocycles. We refer to Theorems \ref{cociclodiagonal} and \ref{cocicloCOdiagonal} to find the complete tables describing the values for each cocycle on the basis $\BB\times \BB$. These theorems also present the cohomology orbit of such cocycles.

\begin{theoremone}\label{teo:intro1}
	Let $A = \fk_3\#\k\G_{3,\ell}$ and let $A_{\lambda,\mu}$, $\lambda,\mu\in\k$, be a non-trivial finite-dimensional pointed Hopf algebra over $\G_{3,\ell}$. Then it follows that $A_{\lambda,\mu}\simeq A_{\sigma}$, where $\sigma=\sigma_{\lambda,\mu}\#\eps\in Z^2(A)$ and $\sigma_{\lambda,\mu}\in Z^2(\fk_3)^{\G_{3,\ell}}$ 
	is fully determined by the values
	\begin{align*}
		\sigma_{\lambda,\mu}(x_0,x_0)&=\mu, & \sigma_{\lambda,\mu}(x_0,x_1)&=\ul, \\ 
		\sigma_{\lambda,\mu}(x_0,x_0x_1x_0)&=\ul(\ul-\mu), & \sigma_{\lambda,\mu}(x_0,x_1x_0x_2)&=\mu^2+\ul^2,
	\end{align*}
	and the fact that $\sigma_{\lambda,\mu}(x_0,x)=0$ if $x\in\BB$ has even degree.
	
	As well, $\sigma_{\lambda,\mu}$ is pure if and only if $\mu\neq \ul$. In fact, $\sigma_{\lambda,\ul}=e^{\eta_\lambda}$, where
	\begin{equation}\label{eqn:eta-lambda}
		\eta_\lambda=\ul(\xi_0+\xi_2)+2\ul^2\xi_{3}
	\end{equation}
	and $\xi_0,\xi_2,\xi_{3}\in\Z^2(\fk_3,\k)^{\G_{3,\ell}}$ as in \S\ref{subsec-hoch-pointed}.
	\qed
\end{theoremone}

When $\ell=1$, we recover in the following corollary the statement about $\s_3$ from \cite[Theorem 4.10]{GaM}. 

\begin{corollary}\label{cor:s3-intro}
	Let $A=\fk_3\#\k{\s_3}$ and let $L$ be a finite-dimensional pointed Hopf algebra over $\s_3$, with $L\not\simeq \k\s_3$. Then there is $\lambda\in\k$ such that  $L\simeq A_{\sigma_\lambda}$, where $\sigma_\lambda\coloneqq\sigma_{\lambda,\lambda/3}\#\eps\in Z^2(A)$ and $\sigma_{\lambda,\lambda/3}=e^{\eta_\lambda}$, for $\eta_\lambda\in\Z^2(\fk_3,\k)^{\s_3}$ as in \eqref{eqn:eta-lambda}.
\end{corollary}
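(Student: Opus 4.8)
The plan is to obtain the corollary as the specialization $\ell = 1$ of Theorem A, the single substantive point being that the parameter $\mu$ becomes redundant for the Hopf algebra structure once $\s_3 \simeq \G_{3,1}$.

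First I would record that for $\ell = 1$ one has $t^2 = 1$ in $\G_{3,1} = \s_3$, hence $t^{2(2k+1)} = 1$, and so the right-hand sides $\mu(1 - t^{2(2k+1)})$ of the quadratic relations in \eqref{eqn:relations-pointed} vanish identically. The presentation of $A_{\lambda,\mu}$ thus collapses to $a_0^2 = a_1^2 = a_2^2 = 0$ together with \eqref{eqn:relation-pointed-s3}, which carries no dependence on $\mu$ and is precisely the presentation of $A_\lambda$. Since the coproduct \eqref{eqn:comult-pointed} is likewise independent of $\mu$ (the elements $g_i = s^i t$ do not involve $\mu$), I conclude that $A_{\lambda,\mu} = A_\lambda$ as Hopf algebras for every $\mu \in \k$; in particular $A_\lambda = A_{\lambda,\ul}$ with $\ul = \lambda/3$.

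Next I would invoke Theorem A with $\ell = 1$, which gives $A_{\lambda,\mu} \simeq A_{\sigma_{\lambda,\mu}\#\eps}$ over $A = \fk_3\#\k\s_3$, and specialize the now-free parameter to the distinguished value $\mu = \ul$. The final assertion of Theorem A then identifies the cocycle: at $\mu = \ul$ it is not pure, and in fact $\sigma_{\lambda,\ul} = e^{\eta_\lambda}$ with $\eta_\lambda = \ul(\xi_0 + \xi_2) + 2\ul^2\xi_3$ as in \eqref{eqn:eta-lambda}, where $\G_{3,\ell}$-invariance reads as $\s_3$-invariance since $\G_{3,1} = \s_3$. Setting $\sigma_\lambda \coloneqq \sigma_{\lambda,\ul}$ and combining with the first step yields $A_\lambda = A_{\lambda,\ul} \simeq A_{\sigma_\lambda\#\eps}$ with $\sigma_\lambda = e^{\eta_\lambda}$, as claimed.

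I expect the only delicate point to be the legitimacy of trading the normalization $\mu = 0$ (used in \eqref{eqn:relation-pointed-s3} to single out $A_\lambda$) for the alternative representative $\mu = \ul$. This is harmless precisely because of the first step: all the $A_{\lambda,\mu}$ coincide with $A_\lambda$ when $\ell = 1$, so the different choices of $\mu$ produce \emph{the same} Hopf algebra but distinct, necessarily cohomologous, cocycles $\sigma_{\lambda,\mu}\#\eps$; among these, $\mu = \ul$ is the unique value for which Theorem A already exhibits the cocycle as an exponential. No independent computation is required beyond Theorem A, whose tables and exponential identity do all the work.
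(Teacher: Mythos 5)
Your main line of argument is correct and is exactly the paper's (implicit) proof: the corollary is the specialization of Theorem A to $\ell=1$, where $t^2=1$ forces $\mu(1-t^{2(2k+1)})=0$, so the presentation of $A_{\lambda,\mu}$ is independent of $\mu$ and one may read off the statement at the distinguished value $\mu=\ul$, for which Theorem A gives $\sigma_{\lambda,\ul}=e^{\eta_\lambda}$.

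However, the parenthetical claim in your last paragraph --- that the different choices of $\mu$ produce ``distinct, \emph{necessarily cohomologous}, cocycles $\sigma_{\lambda,\mu}\#\eps$'' --- is false, and it is contradicted explicitly by Remark \ref{rem:comparacion} of the paper: $\sigma_{\lambda,0}$ is \emph{not} cohomologous to $\sigma_{\lambda,\ul}=e^{\eta_\lambda}$, ``even when the Hopf algebra deformations are so.'' The inference you are making (isomorphic deformations $\Rightarrow$ cohomologous cocycles) reverses an implication that only goes one way: cohomology classes of cocycles correspond bijectively to isomorphism classes of \emph{cleft objects}, and the cleft objects $\mE_{\mu,\lambda}$ for distinct $\mu$ are non-isomorphic here; it is the further passage from cleft objects to deformations that fails to be injective. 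Fortunately this gloss is not load-bearing: the legitimacy of working at $\mu=\ul$ instead of $\mu=0$ needs only your first step, namely the equality $A_\lambda=A_{\lambda,\ul}$ as Hopf algebras, together with Theorem A applied at $\mu=\ul$. Delete the ``necessarily cohomologous'' clause and your proof stands as written, coinciding with the paper's.
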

Thus all deformations over $\s_3$ can be attained using exponentials. This is no longer true when $\ell>1$, nor in the copointed case, see Corollaries \ref{cor:pointed-exponential} and  \ref{cor:copointed-exponential}.

In \cite{GaM}, they consider a Hochschild 2-cocycle $\eta\in\Z^2(\fk_3,\k)$ concentrated in degree one, that is $\eta(x_i,x_j)=\lambda/3$ for $i,j\in\{0,1,2\}$ and zero elsewhere in the basis $\mathbb{B}$. In our language, this is $\eta=\frac{\lambda}{3}(\xi_0+\xi_2)$. In Proposition \ref{pro:fk3-converse}, we see that $e^\eta=\alpha\ra e^{\eta_\lambda}$, for some $\alpha\in\fk_3^\ast$ and $\eta_\lambda$ as in \eqref{eqn:eta-lambda}.

\begin{remark}\label{rem:comparacion}
	Both cocycles $e^\eta$ from \cite{GaM} and $\sigma_{\lambda,\lambda/3}=e^{\eta_\lambda}$ from Corollary \ref{cor:s3-intro} produce, for each  $i\in\{0,1,2\}$, the relation $x_i^2=\frac{\lambda}{3}(1-s^2)=\frac{\lambda}{3}(1-1)=0$, which is not normalized. In this setting, the normalized ($\mu=0$) relation is obtained via the cocycle $\sigma_{\lambda,0}$. This cocycle, however, is not an exponential, nor it is cohomologous to $e^\eta$ or $e^{\eta_\lambda}$. Indeed, the corresponding cleft objects are not isomorphic, even when the Hopf algebra deformations are so.
\end{remark}

Now we turn to the setting of copointed deformations. We refer to \eqref{eqn:etabi} for the definition of the Hochschild cocycles $\xi_i^i\in \Z^2(\fk_3,\k)$, $i=0,1,2$.
\begin{theoremtwo}\label{teo:intro2}
	Consider $A = \fk_3\#\k^{\s_3}$ and let $A_{[\bs\alpha]}$, $\bs\alpha=(\alpha_1,\alpha_2)\in\k^2$, be a non-trivial finite-dimensional copointed Hopf algebra over $\s_3$. Then $A_{[\bs\alpha]}\simeq A_{\sigma}$, where $\sigma=\sigma_{\cc}\#\eps\in Z^2(A)$
	is associated to the triple $\cc=(c_0,c_1,c_2)$ so that 
	\begin{align}\label{eqn:alphas}
		\alpha_1=c_0-c_2, \qquad \alpha_2=c_0-c_1
	\end{align}
	and $\sigma_{\cc}\in Z^2(\fk_3)^{\k^{\s_3}}$ is completely determined by the values
	\begin{align*}
		\sigma_{\cc}(x_i,x_j)&=\delta_{i,j}c_i, & \sigma_{\cc}(x_1,x_0x_1x_2)&=c_2\alpha_1, & \sigma_{\cc}(x_2,x_0x_1x_0)&=-c_1\alpha_1
	\end{align*}
	and the fact that $\sigma_{\cc}(x_i,x)=0$, $i\in X$, when $x\in\BB$ has even degree.

	The cocycle $\sigma_{\cc}$ is pure if and only if at least two of the parameters $\{c_0,c_1,c_2\}$ are different from zero. In fact,
	$\sigma_{(c,0,0)}=e^{c\xi_0^0}$, $\sigma_{(0,c,0)}=e^{c\xi_1^1}$  and $\sigma_{(0,0,c)}=e^{c\xi_2^2}$.
	\qed
\end{theoremtwo}

The following analysis was motivated by a remark pointed out to us by Cristian Vay, whom we thank for a interested reading of our work.

\begin{remark}
	A subtle difference between the braided and linear cocycles is worth mentioning here. While any triple $\cc=(c_0,c_1,c_2)\in\k^3$ can be chosen to define the braided cocycle $\sigma_{\cc}$, or equivalently the braided cleft object $\mE_{\cc}$ see \cite[Lemma 6.5, Proposition 7.2]{GIV2}, when dealing with the associated linear cocycle $\sigma=\sigma_{\cc}\#\eps$, it is enough to consider triples $\cc=(c_0,c_1,c_2)\in\k^3$ so that $c_0+c_1+c_2=0$. 
	Indeed in (the proof of) \cite[Proposition 29]{AV} the authors show that the corresponding (linear) cleft objects are nonzero under this assumption. Which is enough, as \[\cc=(c_0,c_1,c_2)\mapsto \cc'=(c_0-\kappa/3,c_1-\kappa/3,c_2-\kappa/3), \quad \kappa\coloneqq c_0+c_1+c_2,\] leads to an isomorphism $A_{[\bs\alpha]}\simeq A_{[\bs\alpha']}$, for  $\bs\alpha=\bs\alpha(\bs\cc)$ and $\bs\alpha'=\bs\alpha'(\bs\cc')$ as in \eqref{eqn:alphas}. 
	
	In terms of Hopf algebras, we show in Corollary \ref{cor:copointed-exponential} that $A_{[\bs\alpha]}$ is isomorphic to a nontrivial exponential cocycle deformation of $\fk_3\#\k^{\s_3}$ only if $[\bs\alpha]=[1:0]$.
\end{remark}

\subsection{On the proofs}

For the proof, we follow the steps introduced in \cite{GS}: 
\begin{enumerate}[label=(\alph*)]
	\item[(a)] Compute the $H$-linear section $\gamma_{\bs\lambda}\colon \fk_3\to \mE_{\bs\lambda}$ in the basis $\mathbb{B}$.
	\item[(b)] Write down the table of values of $\sigma_{\bs\lambda}$ in terms of $\gamma_{\bs\lambda}$ as in \eqref{eqn:cociclo-gamma-intro}.
	\item[(c)] Describe the space of $H$-invariant Hochschild 2-cocycles $\Z^2(\fk_3,\k)^H$; and find necessary and sufficient conditions so that $e^\eta$ is a Hopf cocycle.
	\item[(d)] Check which cocycles $\sigma_{\bs\lambda}$ are cohomologous to exponentials $e^\eta$: that is we describe the orbit $\alpha\ra\sigma_{\bs\lambda}$ and compare the tables.
\end{enumerate}


We proceed in parallel lines: on the one hand, we perform all of these calculations by hand, and present here the most prototypical arguments. We reserve longer calculations for \cite{Stesis}. As well, we use \cite{GAP} with \cite{GBNP} to produce the values in (b), as well as certain computations for (c) and (d). We refer to code and log files in \texttt{https://www.famaf.unc.edu.ar/\~{}aigarcia/}. This presents a possible alternative for results on Fomin-Kirillov algebras $\fk_n$, $n\geq 4$, cf.~\S\ref{subsec:fomin}. We compute the section by hand, but we remark that there is a related algorithm in \cite[\S4.5]{AG}, which could be adapted to deal with item (a).

\subsubsection{\gap\, computations}\label{sec:gap}

In the file(s) \texttt{cocycle-(co)pointed.g} we compute for each pair of elements $a, b\in\BB$, the values $\sigma(a,b)\in\k$ and $\alpha_d\ra\sigma(a,b)\in\k$.

First, for each $a, b\in\BB$ we compute $\d_a\coloneqq a_{(1)}\ot 1\ot a_{(2)}$ and $\e_b\coloneqq 1\ot b_{(1)}\ot b_{(2)}$. 
This is done as follows: in the algebra $\fk_3\ot \fk_3\ot \fk_3$ (here $\ot$ stands for a braided tensor product), we define:
\begin{align*}
	d_i&\coloneqq x_i\ot 1\ot 1+1\ot 1\ot x_i, &  e_j&\coloneqq 1 \ot x_j\ot 1+1 \ot 1\ot x_j, \qquad i,j\in X.
\end{align*}
For any $a=p_a(x_0,x_1,x_2)$ and $b=p_b(x_0,x_1,x_2)\in \BB$ non-commutative monomials on $\k\lg x_0,x_1,x_2\rg$ we let $\d_a=p_a(d_0,d_1,d_2)$, $\e_b=p_b(e_0,e_1,e_2)$. Next we braid and multiply to obtain the {\it double coproduct}:  
\[
\dcopro(a,b)\coloneqq a_{(1)}\ot b_{(1)}\ot a_{(2)}b_{(2)}, \qquad a,b\in\BB.
\]

Hence we compute for each pair of elements $a, b\in\BB$, the values $\sigma(a,b)\in\k$ via:
\begin{align}\label{eqn:gap-cocycle}
	a\ot b&\longrightarrow \dcopro(a,b)\longrightarrow \gamma(a_{(1)})\gamma(b_{(1)})\gamma^{-1}(a_{(2)} b_{(2)}).
\end{align}
We remark that the final multiplication is in the cleft deformation $\mE$.
The input are the values $\gamma(x)$, $x\in \BB$. As an output we get the nonzero values $\sigma(a,b)$, $a,b\in\BB$. 

Next, we compute in this file the semi-orbit $\alpha\ra\sigma$, $\alpha\in U(\fk_3^\ast)$, namely:
\begin{align}\label{eqn:gap-orbit}
	a\ot b&\longrightarrow \dcopro(a,b)\longrightarrow \sigma(a_{(1)},b_{(1)})\alpha_d^{-1}(a_{(2)} b_{(2)}),
\end{align}
following the description of the dual units $U(\fk_3^\ast)$ in \S\ref{sec:dualunits} and the observation that follows Lemma \ref{lem:alpha} therein. 
The input are the values $\alpha_d(x)$, $x\in \BB$, $d\in\k$, and the final output are the nonzero values of the orbit $\alpha_d\ra\sigma(a,b)$, $a,b\in\BB$. 

In turn, in \texttt{eta2-(co)pointed.g} we calculate in an analogous fashion the coproducts $\d_a'=a_{(1)}\ot 1\ot a_{(2)}\ot 1$ and $\e_b'=1\ot b_{(1)}\ot 1\ot b_{(2)}$ in $\fk_3^{\ot 4}$. Next, we braid and multiply to  compute $\eta^{\ast2}(a,b)\in \k$ as
\begin{align}\label{eqn:gap-eta}
	a\ot b&\rightarrow \dcopro'(a,b)\coloneqq a_{(1)}\ot b_{(1)}\ot a_{(2)}\ot b_{(2)}\rightarrow \eta(a_{(1)}, b_{(1)})\eta( a_{(2)}, b_{(2)}).
\end{align}
Once again, the output is the (unique) nonzero value $\eta^{\ast2}(x_0x_1x_0x_2,x_0x_1x_0x_2)$; the input being the (generic) values of $\eta(x,y)$, $x,y\in\BB$.

\subsection{Organization}

The paper is organized as follows. In Section \ref{sec:prels} we write down some previous concepts on Hopf algebras and deformations. 
We present our results for pointed Hopf algebras in Section \ref{sec:pointed} and the corresponding results for copointed Hopf algebras in Section \ref{sec:copointed}. 

\section{Preliminaries}\label{sec:prels}

We work over a field $\k$ algebraically closed with $\car \k=0$.
We set,  for each $\theta\in\N$,  $I_\theta\coloneqq\{1,\dots,\theta\}\subset \N$; we write $\mathbb{P}^\theta$ for the $\theta$-dimensional $\k$-projective space.

Let us fix $H$ to be a Hopf algebra with bijective antipode $S$.
We denote by
$G(H)$ the set of group-like elements on $H$ and by $\P(H)$ the primitive elements.
Let $H_0$ denote the coradical of $H$. Then $H$ is pointed when $H_0=\k G(H)$ and copointed when $H_0=\k^G$ for a (finite) non-abelian group $G$. We write $\delta_t$, $t\in G$, for the canonical idempotents in $\k^G$.
We  use Sweedler's notation for the comultiplication $\Delta$ of $H$. We write $\ydh$ for the (braided) category of Yetter-Drinfeld modules over $H$. Recall that if $V,W\in \ydh$ then these are simultaneously $H$-modules and comodules, up to certain compatibilities.

A (normalized) Hopf 2-cocycle  is a
linear map $\sigma: H\ot  H \to \ku$
which is convolution invertible and  such that $\sigma(x, 1) = \sigma(1, x) = \eps(x)$ and 
\begin{align*}
	\sigma(x_{(1)}, y_{(1)}) \sigma(x_{(2)} y_{(2)}, z) &=
	\sigma(y_{(1)}, z_{(1)}) \sigma(x, y_{(2)}z_{(2)}), \quad x,y,z\in H.
\end{align*}
We write $Z^2(H)$ for the set of normalized Hopf 2-cocycles in $H$. 

The group of convolution units $U(H^\ast)\subset H^\ast$ acts on $Z^2(H)$ as follows: for each $\alpha\in U(H^\ast)$ and $\sigma\in Z^2(H)$, then 
$\alpha\rightharpoonup\sigma\in Z^2(H)$, where
\begin{equation}\label{eqn:alpha-action-sigma}
	(\alpha\rightharpoonup\sigma)(x,y)=\alpha(x_{(1)})\alpha(y_{(1)})\sigma(x_{(2)},y_{(2)})\alpha^{-1}(x_{(3)}y_{(3)}), \qquad x,y\in H.
\end{equation}
This defines an equivalence  $\sigma\sim \sigma'$, $\sigma,\sigma'\in Z ^2(H)$, if and only if there is $\alpha$ such that $\sigma=\alpha\rightharpoonup\sigma'$. In this case, we say that $\sigma$ and $\sigma'$ are cohomologous Hopf 2-cocycles; we write $\sigma\sim_\alpha\sigma'$ to highlight the map $\alpha\in U(H^\ast)$.

Given $\sigma\in Z^2(H)$, it is possible to define a new multiplication $m_\sigma$ as in \eqref{eqn:mult-def} on the vector space $H$ in such a way that this produces a new Hopf algebra $H_\sigma$, with the same comultiplication $\Delta$. We say that a Hopf algebra is a cocycle deformation of $H$ if it is isomorphic to $H_\sigma$, for some $\sigma$.

A (right) cleft object for $H$ is a right comodule algebra $C$ for which there exists a convolution invertible comodule isomorphism $\gamma\colon H\to C$, with $\gamma(1)=1$, known as the {\it section}. In this setting, it is possible to define a Hopf 2-cocycle for $H$ via
\begin{align}\label{eqn:cociclo-gamma}
	\sigma(x,y)=\gamma(x_{(1)})\gamma(y_{(1)})\gamma^{-1}(x_{(2)}y_{(2)}), \quad x,y\in H.
\end{align}

Moreover, there is well-known one-to-one correspondence between the (isomorphism classes of) cleft objects $\Cleft(H)$ and the (cohomology classes of) Hopf 2-cocycles $H^2(H)\coloneqq Z^2(H)/U(H^\ast)$; see \cite{Sch} for details. This equivalence does not carry on to cocycle deformations, but two cohomologous Hopf cocycles do yield isomorphic Hopf algebras.

Recall that a Nichols algebra is a connected graded Hopf algebra $B=\oplus_{n\geq 0} B_n$ in $\ydh$ generated by $V\coloneqq B_1$ and such that $\P(B)=V$. As a braided bialgebra, $B$ is completely determined by the braiding $c_{V,V}$ on the vector space $V$; we write $B=\Bq(V)$. Conversely, a realization of a braided vector space $(V,c)$ -and consequently of $\Bq(V)$- in $\ydh$ is a structure of Yetter-Drinfeld module on $V$ so that $c$ coincides (as a linear map) with the braiding $c_{V,V}$ in the category.
A realization is called {\it principal} when the coaction is diagonal on a basis of $V$, see \cite[Definition 3.5]{GIV1}, also \cite{AS} for further reference on Nichols algebras.

\subsection{Exponential cocycles}\label{sec:expo}

A possible source of Hopf 2-cocycles is a certain subset of Hochschild 2-cocycles $\eta\in\Z^2(H,\k)$. Assume that $H=\oplus_{n\geq 0} H_n$ is graded and $\eta_{|H\ot H_0+H_0\ot H}=\eps$. Then the exponential $e^\eta=\sum_{k\geq 0}\frac{1}{k!}\eta^{\ast\,k}$ defines a map $H\ot H\to \k$ and a sufficient (though not necessary) condition for this map to be a Hopf 2-cocycle is given by the commutations
\begin{align}\label{eqn:conm1}
	[\eta(\id\ot m), \eps\ot \eta]_{\ast}&=0, & [\eta(m\ot \id), \eta\ot \eps]_{\ast}&=0
\end{align}
in the convolution algebra $\Hom(H^{\ot3},\k)$.
We stress that these conditions are actually not necessary, see Remarks \ref{rem:inclusion-strict-pointed} and \ref{rem:exp-copointed}, where we show that in the examples under consideration in this manuscript the set $\C$ of Hochschild cocycles satisfying \eqref{eqn:conm1} is strictly contained in the set of Hochschild cocycles $\overline{\C}$ for which the exponential defines a Hopf 2-cocycle. This was already evident in \cite{GS}. We write $\ex (\C),\ex(\overline{\C})\subset Z^2(H)$ for the corresponding subsets of Hopf cocycles.

Nevertheless, we find, as in \cite{GS}, that cocycles in $\ex(\overline{\C})$ are cohomologous, as Hopf cocycles, to cocycles in $\ex(\C)$. See Remarks \ref{rem:question-p} and \ref{rem:question-c}. Hence the following question thus arises naturally in this context.

\begin{question}\label{q}
	Let $\eta\in \Z^2(H,\k)$ be such that $e^\eta\in Z^2(H)$. Is there a cocycle $\xi\in \Z^2(H,\k)$ satisfying \eqref{eqn:conm1} so that $e^\eta\sim e^{\xi}$ in $Z^2(H)$? 
\end{question}

\subsection{From deformations to cocycles}\label{sec:method} If $H$ is any semisimple Hopf algebra $H$ and $B\in\ydh$ is a (finitely presented) Nichols algebra, the strategy in \cite{AAGMV}  produces Hopf algebras $(A_{\bs\lambda})_{\bs\lambda\in\bs\Lambda}$ out of a suitable defined collection $(\mE_{\bs\lambda})_{\bs\lambda\in\bs\Lambda}$ of cleft objects for $B$ inside $H\mod$. The Hopf algebras $A_{\bs\lambda}$ are isomorphic to the left Schauenburg algebras $L(A,C_{\bs\lambda})$, where $C_{\bs\lambda}$ is the $A$-cleft object given by the bosonization $C_{\bs\lambda}=\mE_{\bs\lambda}\# H$. This implies that $A_{\bs\lambda}$ is a cocycle deformation of $A$; the explicit form of the cocycles is not provided.

On the other hand, the factorizations $C_{\bs\lambda}\leftrightarrow\mE_{\bs\lambda}\# H$, $\gamma\leftrightarrow\gamma_{\bs\lambda}\#\id_H$, of the cleft objects and the corresponding section $\gamma\colon A\to C_{\bs\lambda}$, implies that the associated cocycles $\sigma\in Z^2(A)$, see \eqref{eqn:cociclo-gamma}, actually belong to the subspace $Z_0^2(A)$ of cocycles $\sigma$ such that $\sigma_{H\ot H}=\eps$ and thus factorize, in turn as
\begin{align}\label{eqn:factor}
	Z_0^2(A) \ni \sigma=\sigma_{\bs\lambda}\# \eps, \qquad \sigma_{\bs\lambda}\in Z^2(B)^H,
\end{align}
where $Z^2(B)^H$ denotes the space of $H$-linear cocycles for the braided bialgebra $B$. 

In particular, the cocycles involved in the classification results can be fully described by writing down the set of values $\sigma_{\bs\lambda}(b,b')$ for $b,b'$ in a basis of $B$. In turn, these values can be computed using the section $\gamma_{\bs\lambda}$ as
\begin{align}\label{eqn:cociclo-gamma-intro}
	\sigma_{\bs\lambda}(b,b')=\gamma_{\bs\lambda}(b_{(1)})\gamma_{\bs\lambda}(b'_{(1)})\gamma_{\bs\lambda}^{-1}(b_{(2)}b'_{(2)}), \quad b,b'\in B.
\end{align}

The factorization \eqref{eqn:factor} also carries on to the setting of Hochschild 2-cocycles, by a combination of \c{S}tefan's results in \cite{St}, most remarkably that $\Z^2(A,\k)\simeq \Z^2(B,\k)^H$ and the fact that an exponential Hopf cocycle $e^\eta$ belongs to the set $Z_0^2(A)$ if and only if $\eta$ is $H$-linear, \cite[Proposition 5.5]{GS}. As a result, we can fully restrict our computations to the braided setting. 
The $H$-action on  $\Z^2(B,\k)$ is explicitly described in \cite[Theorem 4.4]{GS}.
\begin{remark}
	In \cite{Gu} a different approach is considered to calculate Hopf cocycles for the quantum groups $\mathrm{U}_q(\sl_2)$, 
	$1\neq q\in\k^{\times}$ and $\mathfrak{u}_q(\sl_2)$, $q$ a primitive root of 1. No connection to exponentials is mentioned there.
\end{remark}

\subsection{The Fomin-Kirillov algebras}\label{subsec:fomin}

The algebra $\fk_3$ as defined by relations \eqref{eqn:fk3-intro} was first defined in \cite{FK}, and independently in \cite{MS}. It belongs to a series of quadratic algebras $\fk_n$, associated with the symmetric groups $\s_n$, $n\geq 3$.

Next, we review some characteristics of $\fk_3$, which are relevant for this work: we justify our choice of basis, describe the possible realizations and the corresponding deformations. We remark that most of these features extend to the case $n>3$.  We refer to \cite{V} for details. 

The braiding $c\colon V\ot V\to V\ot V$ in \eqref{eqn:braiding-intro} is associated to the {\it rack} $X=\{0,1,2\}$, with operation $i\trid j=2i-j\,(3)$. As well, it can be identified with the conjugacy class of transpositions $\O_2^3=\{(12),(13),(23)\}\subset \s_3$, with $\sigma\trid \tau=\sigma\tau\sigma^{-1}$, via \begin{align*}¨
	0 & \leftrightarrow g_0=(12), &
	1 & \leftrightarrow g_1=(23), &
	2 & \leftrightarrow g_2=(13).
\end{align*}

More generally, we identify $X=\{0,1,2\}$ with the subset of the group $\G_{3,\ell}$ given by the conjugacy class $\{t^{2k+1},st^{2k+1},s^2t^{2k+1}\}$ of $t^{2k+1}$, via
\begin{align}\label{eqn:gi}
	i  \leftrightarrow  g_i\coloneqq s^it^{2k+1}, \qquad i=0,1,2.
\end{align}
This defines an action $\G_{3,\ell}\times X\to X$ via the {\it conjugation}:
\begin{align}\label{eqn:action}
	g\cdot i\coloneqq j \quad \text{ if } \quad  g_j=gg_ig^{-1}.
\end{align}

\subsubsection{The basis}\label{sec:basis} As presented in Gra\~na's \textit{Zoo of Nichols algebras} \cite{zoo}, the basis $\BB$ in \eqref{eqn:basis} is obtained  by multiplying one element in each set from left to right in all the possible ways:
\begin{align*}
	\{1, x_0\} && \{1, x_1, x_1x_0\} && \{1, x_2\}.
\end{align*}

This is also the basis produced by \gap\, under the function \texttt{BaseQA}. As well, this is essentially the unique monomial (homogeneous) basis as relations imply
the following identities in $\fk_3$, for $i,j,k \in \{0,1,2\}$ pairwise different:
\begin{align}\label{RelA}
	x_i x_j x_i=x_j x_i x_j=-x_i x_k x_j=-x_j x_k x_i, 
\end{align}
As well, any monomial of degree 4 is $\pm x_0x_1x_0x_2$.

We shall write $\BB^+$ for $\BB\cap\fk_3^+$, namely $\BB^+\coloneqq \BB\setminus\{1\}$.

\subsubsection{Principal realizations}
Next we review some particular realizations of the algebra $\fk_3$ as a Hopf algebra in the Yetter-Drinfeld category $\ydh$, both for $H=\k\G_{3,\ell}$ and $H=\k^{\s_3}$. 
In turn, these give rise to families of Hopf algebras, as deformations of  $\fk_3\# H$.

\subsection{Pointed deformations}\label{sec:pointed-def}
Set $H=\k\s_3$ and recall the elements $g_i$ in \eqref{eqn:gi} and the action \eqref{eqn:action}. Then $(V,c)$ admits a principal YD-realization in $\ydh$, via
\begin{align}\label{eqn:real-s3}
	\omega\cdot x_i&=\sgn(\omega) x_{\omega\cdot i}, & x_i&\mapsto g_i\ot x_{i}, \qquad i\in X, \ \omega\in\s_3.
\end{align}
In this way, $\fk_3$ becomes the only (non-trivial) finite-dimensional Nichols algebra in $\ydh$ cf.~\cite{AHS}.
This point of view allows us to consider a more general setting as  in \cite{GIV1}, in which $\s_3$ becomes a particular case of the family of groups $\G_{3,\ell}=C_3\rtimes C_{2\ell}$, $\ell\in\N$, as in \eqref{eqn:g3}. 
The realization \eqref{eqn:real-s3} is then part of a family of principal YD-realizations, one for each $0\leq k<\ell$, coming from the choice of $g_i=s^it^{2k+1}$ in \eqref{eqn:gi}. For $i\in X$ and $g=s^jt^r\in\G_{3,\ell}$, the realization is as follows:
\begin{align}\label{eqn:real-k}
	g\cdot x_i&=(-1)^r x_{g\cdot i}, & x_i&\mapsto g_i\ot x_{i}.
\end{align}
Thus finite-dimensional pointed Hopf algebras over $\G_{3,\ell}$, coming from principal realization of $\eqref{eqn:braiding-intro}$, are classified in terms of pairs $(\lambda,\mu)\in\k^2$. 
For each $0\leq k<\ell$, these are the Hopf algebras $A_{\lambda,\mu}^{(k)}$ described as the quotients of $T(V)\#\k\G_{3,\ell}$, where $V=\k\{a_0,a_1,a_2\}$, modulo the relations
\begin{align}
	\notag a_0^2=a_1^2=a_2^2&=\mu(1-t^{2(2k+1)}),\\
	\label{eqn:relations-pointed}a_1a_0+a_2a_1+a_0a_2&=\lambda(1-st^{2(2k+1)}),\\
	\notag a_0a_1+a_1a_2+a_2a_0&=\lambda(1-s^2t^{2(2k+1)}).
\end{align}
Every $A_{\lambda,\mu}=A_{\lambda,\mu}^{(k)}$ is a cocycle deformation of $\fk_3\#\k\G_{3,\ell}$, see \cite{GIV1}. The isomorphism classes are in correspondence with classes $[\mu:\lambda]\in\mathbb{P}^1$. 

We remark that when $\ell=1$, namely the  $\s_3$-case, then $t^2=1$ and thus we can normalize $\mu=0$. We obtain a family of algebras $A^\lambda\coloneqq A_{\lambda,0}$, $\lambda\in\k$, given by relations  $a_0^2=a_1^2=a_2^2=0$ and
\begin{align}\label{eqn:relation-pointed-s3}
	a_1a_0+a_2a_1+a_0a_2&=\lambda(1-(132)),  & a_0a_1+a_1a_2+a_2a_0&=\lambda(1-(123)).
\end{align}
In this way, only two  Hopf algebras are obtained up to isomorphism: the graded algebra $A^0=\fk_3\#\k\s_3$ and a single non-trivial deformation $A^1$, as $A^\lambda\simeq A^1, \lambda\neq 0$. 

\subsection{Copointed deformations}\label{sec:copointed-def}
The dual braiding $c'\colon V\ot V\to V\ot V$ is
\begin{align}\label{eqn:bvs2}
	c'(x_i\ot x_j)=-x_{j}\ot x_{j\trid i}.
\end{align}
We let $\s_3$ act on $X$ as above: now $(V,c')$ can be realized in $\ydh$, $H=\k^{\s_3}$,  via 
\begin{align}\label{eqn:coreal}
	\delta_\omega\cdot x_i&=\delta_{\omega,g_i} x_{i}, & \tau&\mapsto \sum_{\omega\in\s_3} \sgn(\omega)\delta_\omega\ot x_{\omega^{-1}\cdot i}, \qquad i\in X, \ \omega\in\s_3.
\end{align}

It follows that any non-trivial finite-dimensional copointed Hopf algebra over $\k^{\s_3}$ is a deformation of $\fk_3\# H$, see \cite{AV}. 
More precisely, finite-dimensional copointed Hopf algebras over $\k^{\s_3}$ are classified in \cite[Theorem 3.5]{AV1} by pairs $\bs\alpha=(\alpha_1,\alpha_2)\in\k^2$ (corresponding to $(-a_1,-a_2)$ in the notation in {\it loc.cit.}). These are the Hopf algebras $A_{\bs\alpha}$ obtained as the quotient of $T(V)\#\k^{\s_3}$ with relations
\begin{align}
	\begin{split}\label{eqn:relations-copointed}
		&a_0^2=\alpha_1(\delta_{(23)}+\delta_{(123)}) + \alpha_2(\delta_{(13)}+\delta_{(132)}), \\
		&a_1^2=-\alpha_2(\delta_{(13)}+\delta_{(123)})+(\alpha_1 - \alpha_2)(\delta_{(12)}+\delta_{(132)}), \\
		&a_2^2=(\alpha_2 - \alpha_1)(\delta_{(12)}+\delta_{(123)}) - \alpha_1(\delta_{(23)}+\delta_{(132)}),\\
		&a_0a_1+a_1a_2+a_2a_0=a_1a_0+a_2a_1+a_0a_2=0.
	\end{split}
\end{align}

Here $\delta_t$, $t\in\s_3$, denote the standard idempotents in $\k^{\s_3}$.
Again, every $A_{\bs\alpha}$ is a cocycle deformation of $\fk_3\#\k^{\s_{3}}$. The associated cleft objects $\mE_{\bs\alpha}$ actually depend on triplets $\cc=(c_0,c_1,c_2)\in\k^3$, in such a way that we recover the parameters in the deformed Hopf algebras as in \eqref{eqn:alphas}.

This is related by the procedure to determine the deformations introduced in \cite{AAGMV}. See also \cite[\S 5.1]{GIV2} for a explicit description in the quadratic case. In short, one defines the deformations $A_{(\alpha_1,\alpha_2)}=A_{(c_0-c_2,c_0-c_1)}$ in terms of the cleft objects $\mE_{\cc}=\mE_{(c_0,c_1,c_2)}$. We write our cocycles in this language.

The isomorphism classes of the algebras $A_{\bs\alpha}$ are given by the orbits of the standard (right) projective $\s_3$-action on $[\bs\alpha]=[\alpha_1:\alpha_2]\in\mathbb{P}^1$, namely
\begin{align}\label{eqn:alphas-action}
	[\alpha_1:\alpha_2]\triangleleft (12)&=[\alpha_2:\alpha_1], & [\alpha_1:\alpha_2]\triangleleft (23)&=[-\alpha_1:\alpha_2-\alpha_1].
\end{align}
We write $A_{[\bs\alpha]}$ to refer to a choice of a representative of the isoclass of $A_{\bs\alpha}$.

\subsubsection{Hochschild 2-cocycles}\label{subsec:hoch2}
Assume that $B=\bigoplus_{n\geq 0} B_n$ is a (non-trivial) graded bialgebra, with $B_0=\k$. We set $\underline{\Delta}(1)=1\ot 1$ and for $b\in B^+\coloneqq\bigoplus_{n\geq 1} B_n$:
\begin{align}\label{eqn:underline}
	\underline{\Delta}(b)\coloneqq\Delta(b)-b\ot 1-1\ot b, \qquad b_{\underline{{(1)}}}\ot b_{\underline{{(2)}}}\coloneqq \underline{\Delta}(b).
\end{align}

We recall from \cite[Lemma 4.1]{GS} that a linear map $\eta:B\ot B\to \k$  is a Hochschild 2-cocycle in $\Z^2(B,\k)$ if and only if the following pair of conditions holds:
\begin{align}\label{eqn:conds-hoch}
	\eta(a,1)&=\eta(1,c)=0, \quad a,c\in B^+, & \text{ and }&&
	\eta(a,bc)&=\eta(ab,c), \quad b\in B.
\end{align}

Thus, if $\BB$ is a basis of $B$, then the set $\{\eta(b,x_i)|b\in\mathbb{B},i\in\I_\theta\}\cup\{\eta(1,1)\}$ determine the values of $\eta$. 
In particular, let $\{x_1,\dots,x_\theta\}$ be a basis of $B_1$, and assume $\{x_1,\dots,x_\theta\}\subset \mathbb{B}$. For each $i,j\in\I_\theta$, the linear map
\begin{align}\label{eqn:etabi}
	\xi_{j}^{i}\colon B\otimes B\to \k, \qquad b\ot b'\mapsto \delta_{b',x_j}\delta_{b,x_i}, \quad b,b'\in\mathbb{B}.
\end{align}
defines a Hochschild 2-cocycle in $B$. Also, we set $\veps\coloneqq \eps_B\ot\eps_B \in\Z^2(B,\k)$.

\subsubsection{Dual units}\label{sec:dualunits}
Following \eqref{eqn:alpha-action-sigma}, we see that to understand the cohomology class $[\sigma]$ of a Hopf cocycle $\sigma\in Z^2(\fk_3)^H$, we need to characterize the group of $H$-linear units $U_H(\fk_3^\ast)$, both for $H=\k\G_{3,\ell}$ and $\k^{\s_{3}}$. 


\begin{lemma}\label{lem:alpha}
	Let $\alpha\in U_H(\fk_3^\ast)$, with $\alpha(1)=1$. Then there is $d\in\k$ so that $\alpha=\alpha_d$ is the map determined by
	\begin{align}\label{eqn:alpha-d}
		\alpha(x_i)=\alpha_d(x_ix_j)=\alpha_d(x_ix_jx_i)&=0, \ i\neq j\in X, & \alpha_d(x_0&x_1x_0x_2)=d.
	\end{align}
\end{lemma}
\pf
Follows from the identification $\fk_3^\ast\simeq \fk_3$ in $\ydh$.
\epf

In particular, if $a,b\in\BB^+$ are not both of degree 4, then \eqref{eqn:alpha-action-sigma} becomes
\begin{align}\label{eqn:presigma}
	(\alpha_d\ra\sigma)(a,b)=
	\sigma(a_{(1)},b_{(1)})\alpha^{-1}(a_{(2)}b_{(2)}), 
\end{align}
while for $a=b=x_0x_1x_0x_2$, we get: 
\begin{multline*}
	(\alpha_d\ra\sigma)(a,b)=\alpha_{d}(a)\alpha_d(b)+\alpha_{d}(a)\alpha_d^{-1}(b)+\alpha_{d}(b)\alpha_d^{-1}(a)\\
	+\sigma(a_{(1)},b_{(1)})\alpha^{-1}(a_{(2)}b_{(2)})=\sigma(a_{(1)},b_{(1)})\alpha^{-1}(a_{(2)}b_{(2)})-d^2.
\end{multline*}
Thus we can restrict to compute \eqref{eqn:presigma} to find the orbit of $\sigma$, as explained in \S\ref{sec:gap}.

\section{Deformations over $\s_3$}\label{sec:pointed}

In this section we compute the cocycles $\sigma\in Z^2(A)$ associated to the pointed deformations of $A=\fk_3\#\k\mathbb{G}_{3,\ell}$ as in \eqref{eqn:relations-pointed}; we particularly recall the realization \eqref{eqn:real-k}. We fix $H=\k\G_{3,\ell}$ and $X=\{0,1,2\}$ We set, for $\lambda\in\k$, the notation $\ul=\frac{\lambda}{3}$. 

As well, we shall make great use of identities \eqref{RelA}.

We start by recalling the cleft objects and by computing the corresponding section in Lemma \ref{lem:gamma-pointed}. Then we use this to write down the orbit of the cocycles in Theorem \ref{cociclodiagonal}. We compute the space of invariant Hochschild 2-cocycles $\Z^2(\fk_3,\k)^{\mathbb{G}_{3,\ell}}$ in Proposition \ref{lemprop:hochschild-pointed}, find when an exponential $e^\eta$, $\eta\in \Z^2(\fk_3,\k)^{\mathbb{G}_{3,\ell}}$ is a Hopf 2-cocycle in Proposition \ref{pro:fk3-converse} and decide which Hopf 2-cocycles $\sigma$ are pure and which ones are cohomologous to some $e^\eta$ in Theorem \ref{thm:fk3-sigma-pure}.

\subsection{Cleft objects}

Recall from \cite{GIV1} that the (braided) cleft objects for $\fk_3$ are the algebras $\mE_{\lambda,\mu}$, $\mu,\lambda\in\k$, generated by $y_0,y_1,y_2$ and relations:
\begin{align*}
	y_0^2=y_1^2=y_2^2&=\mu, &
	y_1y_0+y_0y_2+y_2y_1=y_0y_1+y_1y_2+y_2y_0&=\lambda.
\end{align*}

Thus, $\mE_{\lambda,\mu}$ inherits a basis $\mathbb{B}$ as in \eqref{eqn:basis}. We shall make use of the following relations, which hold for pairwise different $i,j,k\in X$:
\begin{align}\label{relH2}
	y_iy_ky_j&=y_jy_ky_i, & y_iy_jy_i&=y_jy_iy_j- \lambda y_j+\lambda y_i=-y_iy_ky_j-\mu y_k+\lambda y_i.
\end{align}
The structure of (right) $\fk_3$-comodule algebra  $\rho:\mE_{\lambda,\mu}\to \mE_{\lambda,\mu}\ot \fk_3$ is given by
\begin{align}\label{eqn:coaction}
	\rho(y_i)=y_i\ot 1+1\ot x_i; \qquad i\in X.
\end{align}

To determine the associated Hopf 2-cocycle $\sigma_{\lambda,\mu}$, we need an explicit description of the section $\gamma_{\lambda,\mu}:\fk_3\to \mE_{\lambda,\mu}$,
which is the content of the next lemma. We remark that this is an algorithmical computation, which basically comes up to reducing the expression $\rho-(\gamma\ot \id)\Delta$ on each basic element, {\it cf.}~\cite[\S4.5]{AG}. 

As well, we stress that this map is $\G_{3,\ell}$-linear: recall the action induced by the realization \eqref{eqn:real-k} of the braiding \eqref{eqn:braiding-intro} and the definition \eqref{eqn:gi} of $g_i\in\G_{3,\ell}$, $i\in X$.

\begin{lemma}\label{lem:gamma-pointed}
	Let	$\mE_{\lambda,\mu}$ be as above and let $i,j,k\in X$ be pairwise different.
	\begin{enumerate}[leftmargin=*,label=(\alph*)]
		\item  The section $\gamma\coloneqq\gamma_{\lambda,\mu}\colon \fk_3\to \mE_{\lambda,\mu}$ is given by:
		\begin{itemize}
			\item $\gamma(x_i)=y_{i}$, 
			\item $\gamma(x_{i}x_{j})= y_{i}y_{j}-\ul$, 
			\item $\gamma(x_0x_{1}x_0)=y_0y_1y_0+\ul y_1-2\ul y_0$,
			\item $\gamma(x_0x_{1}x_2)=y_0y_{1}y_{2}+\mu y_1-\ul(y_0+y_2)$,
			\item $\gamma(x_{1}x_0x_{2})= y_{1}y_0y_{2}+\mu y_0-\ul(y_1+y_2)$,
			\item $\gamma(x_0x_1x_0x_2)=y_0y_1y_0y_2+\ul(y_1y_2-y_1y_0-y_0y_1)-2\ul y_0y_2$.	
		\end{itemize}
		\item The convolution inverse $\gamma^{-1}:\fk_3\to \mE_{\lambda,\mu}$ is determined by:
		\begin{align*}
			\gamma^{-1}(x_ix_j)&=-y_{k}y_i+\ul, & \gamma^{-1}(x_0x_1x_0x_2)&=\gamma(x_0x_0x_1x_2)+\mu^2+3\ul^2,
		\end{align*}
		and $\gamma^{-1}(b)=-\gamma(b)$, for $b\in\BB$ with positive odd degree.
	\end{enumerate}
\end{lemma}

\begin{proof}
	(a)	The identities $\gamma(x_i)=y_i$, $i\in X$ are clear. To continue, 
	we compute the coproduct for each other $b\in \mathbb{B}$. 
	Then we define $\gamma(b)$ in such a way that:
	the identity $\rho\circ\gamma=(\gamma\ot \id)\circ\Delta$ is verified
	and the map $\gamma$ is $H$-linear.
	
	Following \eqref{RelA}, we compute $\Delta(x_ix_jx_i)$ to deal with an element in degree 3.
	
	Now, let $i,j,k\in X$ be all different, then:
	\begin{align}
		\notag\underline{\Delta}(x_ix_j)=&x_i\ot x_j-x_k\ot x_i.
		\\
		\label{eqn:coprod1}\underline{\Delta}(x_ix_jx_i)=&
		x_ix_j\ot x_i-x_ix_k\ot x_j+x_i\ot x_jx_i+x_kx_i\ot x_i+x_j\ot x_ix_j.
		\\
		\notag \underline{\Delta}(x_0x_1x_0x_2)=& x_0x_1x_0\ot x_2-x_0x_1x_2\ot x_1-x_1x_0x_2\ot x_0+x_0\ot x_1x_0x_2\\
		\notag&+x_1\ot x_0x_1x_2-x_2\ot x_0x_1x_0-x_0x_2\ot x_1x_2+x_0x_1\ot x_1x_0\\
		\notag&
		+x_1x_0\ot x_0x_1
		-x_1x_2\ot x_0x_2.
	\end{align}
	
	In the case of the basic elements $x_ix_j$, we necessarily have that 
	\[\gamma(x_ix_j)=y_iy_j-c_{ij}, \qquad c_{ij}\in\k.
	\] Then, the relations in $\mE_{\lambda,\mu}$ and the action of $H$ forces $c_{ij}=\ul$, $i,j\in X$. 
	
	To find the other values of $\gamma$, we note that for $i,j,k\in X$, all different:
	\begin{align*}
		\rho(y_iy_jy_i)-(\gamma\ot \id)\Delta(x_ix_jx_i)=&(y_iy_jy_j-\gamma(x_ix_jx_i))\ot 1\\
		&+\ul(1\ot x_i-1\ot x_j+1\ot x_i).
	\end{align*}
	
	Hence $\gamma(x_ix_jx_i)=y_iy_jy_i+\ul y_j-2\ul y_i\stackrel{\eqref{relH2}}{=}-y_iy_ky_j-\mu y_k+\ul y_i+\ul y_j$.
	
	Now, if $k\in X=\{0,1,2\}$, then
	\begin{align*}
		\gamma(g_k\cdot(x_ix_jx_i))=-y_{k\trid i}y_{k\trid j}y_{k\trid i}-
		\ul y_{k\trid j}+2\ul y_{k\trid i}=g_k\cdot\gamma(x_ix_jx_i).
	\end{align*}
	This shows that $\gamma$ is $\G_{3,\ell}$-linear for this basic elements. Finally, by \eqref{RelA}, we get that $\gamma(x_1x_0x_2)=y_1y_0y_2+\mu y_0-\ul(y_2+y_1)$ and $\gamma(x_0x_1x_2)=y_0y_1y_2+\mu y_1-\ul(y_0+y_2)$.
	
	Finally, we proceed in the same manner for $\gamma(x_0x_1x_0x_2)$. We get
	\begin{align*}
		(\gamma\ot \id)\Delta(&x_0x_1x_0x_2)-\rho(y_0y_1y_0y_2)
		=(y_2y_0y_1y_0-\gamma(x_2x_0x_1x_0))\ot 1\\
		&-\ul\ot x_0x_1+\ul(y_1+y_2)\ot x_0+(\ul y_2-2\ul  y_0)\ot x_1-\ul\ot x_1x_0\\
		&+(\ul y_1-2\ul y_0)\ot x_2+\ul\ot x_1x_2-2\ul\ot x_0x_2+\ul\ot x_2x_0.
	\end{align*}
	Now, we also get that
	\begin{multline*}
		(\gamma\ot \id)\Delta(x_0x_1x_0x_2)-\rho(y_0y_1y_0y_2-2\ul y_0y_2+\ul (y_1y_2- y_1y_0- y_0y_1))\\
		=(\gamma(x_0x_1x_0x_2)-(y_0y_1y_0y_2-2\ul y_0y_2+\ul (y_1y_2- y_1y_0- y_0y_1)))\ot 1.
	\end{multline*}
	Hence, we define;
	\begin{align*}
		\gamma(x_0x_1x_0x_2)=y_0y_1y_0y_2+\ul (y_1y_2- y_1y_0- y_0y_1)-2\ul y_0y_2;
	\end{align*}
	it follows that this is indeed $\G_{3,\ell}$-invariant.
	
	(b)  It is easy to see that $\gamma^{-1}(x_i)=-y_i$. The formula for bigger degree follows by analyzing, recursively, the identity $(\gamma^{-1}\ast\gamma)(b)=\eps(b)=0$, $b\in\BB^+$.
	For instance:
	\begin{align*}
		0&=\gamma^{-1}(x_0x_1)=\gamma^{-1}(x_0x_1)+\gamma^{-1}(x_0)\gamma(x_1)-\gamma^{-1}(x_2)\gamma(x_0) +\gamma(x_0x_1)\\
		&=\gamma^{-1}(x_0x_1)-y_0y_1+y_2y_0 +y_0y_1-\ul,
	\end{align*}
	so $\gamma^{-1}(x_0x_1)=-y_2y_0 +\ul$. The other values are obtained similarly.
\end{proof}

We recall the characterization of the group $U_H(\fk_3^\ast)$ from Lemma \ref{lem:alpha}, so $[\sigma_{\lambda,\mu}]=\{\alpha_d\ra\sigma_{\lambda,\mu}:d\in\k\}$. We set:
\begin{align*}
	p_{\lambda,\mu}&=(\mu-\ul)^2(\mu+2\ul),  & q_{\lambda,\mu}(d)&=-\mu(6\ul^3-3\ul^2\mu+\mu^3)+d(3\ul^2+\mu^2-d),
\end{align*}
together with $r_{\lambda,\mu}=\ul(\mu-\ul)$.
We present the full description of $[\sigma_{\lambda,\mu}]$; empty slots correspond to the value zero.

\begin{theorem}\label{cociclodiagonal}
	The class $[\sigma_{\lambda,\mu}]$ of the Hopf cocycle associated to $\mE_{\lambda,\mu}$ is given by:
	\begin{table}[H]
		\resizebox{12.5cm}{!}
		{\begin{tabular}{| c|c|c|c|c|c|c|c|c|c|c|c|}
				\hline
				\multirow{2}{*}{$[\sigma_{\lambda,\mu}]$}  & \multirow{2}{*}{$x_0$} & \multirow{2}{*}{$x_1$} & \multirow{2}{*}{$x_2$} & \multirow{2}{*}{$x_0x_1$} & \multirow{2}{*}{$x_0x_2$} & \multirow{2}{*}{$x_1x_0$} & \multirow{2}{*}{$x_1x_2$} & \multirow{2}{*}{$x_0x_1x_0$} & \multirow{2}{*}{$x_0x_1x_2$} & \multirow{2}{*}{$x_1x_0x_2$} & \multirow{2}{*}{$x_0x_1x_0x_2$}\\
				&&&&&&&&&&&\\
				\hline
				$x_0$  & \multirow{2}{*}{$\mu$} & \multirow{2}{*}{$\ul$} & \multirow{2}{*}{$\ul$} &  &  &  &  & \multirow{2}{*}{$-r_{\lambda,\mu}$} & \multirow{2}{*}{$r_{\lambda,\mu}$} & \parbox[t]{1.1cm}{$\ul^2+\mu^2$\\ \hspace*{.5cm} $-d$} & \\
				\hline
				$x_1$  & \multirow{2}{*}{$\ul$} & \multirow{2}{*}{$\mu$} & \multirow{2}{*}{$\ul$} &  &  &  &  & \multirow{2}{*}{$-r_{\lambda,\mu}$} & \parbox[t]{1.1cm}{$\ul^2+\mu^2$\\ \hspace*{.5cm}$-d$} & \multirow{2}{*}{$r_{\lambda,\mu}$} & \\
				\hline
				$x_2$  & \multirow{2}{*}{$\ul$} & \multirow{2}{*}{$\ul$} & \multirow{2}{*}{$\mu$} &  &  &  &  & \parbox[t]{1.3cm}{$-\ul^2-\mu^2$ \\ \hspace*{.8cm}$+d$} & \multirow{2}{*}{$r_{\lambda,\mu}$} & \multirow{2}{*}{$r_{\lambda,\mu}$} & \\
				\hline
				$x_0x_1$  &  &  &  & \multirow{2}{*}{$-2r_{\lambda,\mu}$} & \parbox[t]{.7cm}{$2\ul^2$\\ \hspace*{.1cm} $-d$} & \multirow{2}{*}{$\mu^2-\ul^2$} & \multirow{2}{*}{$r_{\lambda,\mu}$} &  &  &  & \\
				\hline
				$x_0x_2$  &  &  &  & \parbox[t]{.7cm}{$2\ul^2$\\ \hspace*{.1cm} $-d$} & \multirow{2}{*}{$-2r_{\lambda,\mu}$} & \parbox[t]{.7cm}{$r_{\lambda,\mu}$\\ \hspace*{.1cm} $+d$} & \multirow{2}{*}{$-\ul^2-\mu^2$} &  &  &  & \\
				\hline
				$x_1x_0$  &  &  &  & \multirow{2}{*}{$\mu^2-\ul^2$} & \multirow{2}{*}{$r_{\lambda,\mu}$} & \multirow{2}{*}{$-2r_{\lambda,\mu}$} & \parbox[t]{.7cm}{$2\ul^2$\\ \hspace*{.1cm} $-d$} &  &  &  & \\
				\hline
				$x_1x_2$  &  &  &  & \multirow{2}{*}{$r_{\lambda,\mu}$} & \multirow{2}{*}{$-\ul^2-\mu^2$} & \parbox[t]{.7cm}{$2\ul^2$\\ \hspace*{.1cm} $-d$} & \multirow{2}{*}{$-2r_{\lambda,\mu}$} &  &  &  & \\
				\hline
				$x_0x_1x_0$  & \multirow{2}{*}{$-r_{\lambda,\mu}$} & \multirow{2}{*}{$-r_{\lambda,\mu}$} & \parbox[t]{.7cm}{$2\ul^2$\\ \hspace*{.1cm} $-d$} &  &  &  &  & \multirow{2}{*}{$p_{\lambda,\mu}$} &  &  & \\
				\hline	
				$x_0x_1x_2$  & \multirow{2}{*}{$r_{\lambda,\mu}$} & \parbox[t]{.8cm}{$-2\ul^2$\\ \hspace*{.2cm} $+d$} & \multirow{2}{*}{$r_{\lambda,\mu}$} &  &  &   &  &  & \multirow{2}{*}{$p_{\lambda,\mu}$}&  & \\
				\hline
				$x_1x_0x_2$  & \parbox[t]{.8cm}{$-2\ul^2$\\ \hspace*{.2cm} $+d$} & \multirow{2}{*}{$r_{\lambda,\mu}$} & \multirow{2}{*}{$r_{\lambda,\mu}$} &  &  &   &  &  &  & \multirow{2}{*}{$p_{\lambda,\mu}$} & \\
				\hline
				\parbox[t]{1.5cm}{$x_0x_1x_0x_2$ \\}  &  &  &  &  &  &  &  &  &  &  & \multirow{2}{*}{$q_{\lambda,\mu}(d)$} \\
				\hline
		\end{tabular}}
	\end{table}
\end{theorem}

\begin{proof}
	We use Lemma \ref{lem:gamma-pointed} and compute the cocycle $\sigma_{\lambda,\mu}$ using \gap, following \eqref{eqn:gap-cocycle}.
	In turn, we follow \eqref{eqn:gap-orbit} to write down the orbit $\alpha_d\ra\sigma_{\lambda,\mu}$, $d\in\k$.  
	
	This is done \texttt{cocycle-pointed.g}. 
\end{proof}

\begin{remark}
	As an alternative proof, we compute by hand the set 
	\[
	S_\sigma=\{\sigma(x_i,b):i\in X, b\in \mathbb{B}\}
	\]
	and finish the proof following the decomposition formula from \cite[Lemma 3.2]{GS}: these artisanal computations will be part of \cite{Stesis}. As an example, We recall from \cite[Example 3.3]{GS} that in degree 2 this formula reads:
	\begin{align}\label{eqn:dec-form2}
		\sigma(x_ix_j,b)=\sigma(x_i,x_jb)+\sigma(x_i,b_{\underline{(2)}})\sigma(x_j,b_{\underline{(1)}}).
	\end{align}
\end{remark}

\subsection{Hochschild cocycles}\label{subsec-hoch-pointed}

We recall the cocycles $\xi_i^j\in \Z^2(\fk_3,\k)$, $i,j\in X$, defined in \eqref{eqn:etabi} and let
\begin{align*}
	\xi_0&\coloneqq \sum_{i\in X}\xi_i^i, & \xi_2&\coloneqq \sum_{i\neq j\in X}\xi_i^j.
\end{align*}
We introduce a third linear map $\xi_{3}\colon \fk_3\ot \fk_3\to \k$. To do this, we first fix subsets $X_\pm\subset \BB\times \BB$ as 
\begin{align*}
	X_+&=\{(x_2,x_0x_1x_0),(x_0x_2,x_1x_2),(x_1x_2,x_0x_2),(x_1x_0x_2, x_0),(x_0x_1x_2,x_1)\},\\ 
	X_-&=\{(x_0,x_1x_0x_2),(x_1,x_0x_1x_2), (x_0x_1,x_0x_2),(x_1x_0,x_1x_2),\\
	&\hspace*{5cm} (x_0x_2,x_0x_1),(x_1x_2,x_1x_0),(x_0x_1x_0,x_2)\}
\end{align*}
and define 
\begin{align*}
	\xi_{3}(a,b)&=\pm1, \ \text{if }(a,b)\in X_\pm, & \xi_{3}(a,b)&=0 \ \text{elsewhere in }\BB\times \BB.
\end{align*}

Once again, we remind of the braiding \eqref{eqn:braiding-intro} and its  realization \eqref{eqn:real-k}, which determines the $\G_{3,\ell}$-action; as well as the elements $g_i\in\G_{3,\ell}$, $i\in X$.
\begin{lemma}\label{lem:cocycles-xib}
	The following assertions hold.
	\begin{enumerate}
		\item[(a)] $\xi_0, \xi_2,\xi_{3}\in \Z^2(\fk_3,\k)^{\mathbb{G}_{3,\ell}}$.
		\item[(b)] $\xi_2,\xi_{3}\in  \B^2(\fk_3,\k)$.
	\end{enumerate}
\end{lemma}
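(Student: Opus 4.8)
The plan is to verify the two claims separately, using the two characterizations already provided in the excerpt. For part (i), I would appeal to the criterion recalled from \cite[Lemma 4.1]{GS}, namely that a linear map $\eta\colon B\ot B\to\k$ lies in $\Z^2(B,\k)$ precisely when $\eta(a,1)=\eta(1,c)=0$ for $a,c\in B^+$ and $\eta(a,bc)=\eta(ab,c)$ for all $b\in B$; see \eqref{eqn:conds-hoch}. The maps $\xi_0,\xi_2,\xi_3$ are supported on pairs of basis elements whose total degree is $2$, $2$ and $4$ respectively, so in each case the first condition is immediate (both arguments lie in $B^+$). The cocycle condition $\eta(a,bc)=\eta(ab,c)$ is a statement about a fixed total degree, so it suffices to check it degree-by-degree on the basis $\mathbb{B}$; for $\xi_0,\xi_2$ only the degree-$2$ instances are nonzero and for $\xi_3$ only the degree-$4$ instances, which keeps the bookkeeping finite. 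Concretely one expands $bc$ (resp.\ $ab$) in the basis $\mathbb{B}$ using the defining relations \eqref{eqn:fk1}--\eqref{eqn:fk3} and \eqref{RelA}, and compares the resulting coefficients; this is the kind of routine but nontrivial relation-chasing I would expect to be the main source of effort.

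For the $\mathbb{G}_{3,\ell}$-invariance in part (i), I would use the explicit description of the $H$-action on $\Z^2(\fk_3,\k)$ from \cite[Theorem 4.4]{GS}, which is ultimately governed by the group action on the basis. Since $X=\{0,1,2\}\subset\mathbb{G}_{3,\ell}$ acts by the rack operation $i\trid j=2i-j$ together with the sign $(-1)^r$ from \eqref{eqn:real-k}, invariance amounts to checking that the defining supports of $\xi_0,\xi_2,\xi_3$ are stable (with the correct signs) under this action. For $\xi_0$ this is clear since $p\trid i$ permutes $\{0,1,2\}$ and preserves the diagonal pairs $(x_i,x_i)$; for $\xi_2$ likewise it preserves off-diagonal pairs of degree-one generators. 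For $\xi_3$ the sets $X_3$ and $X_3'$ must be shown to be permuted among themselves by the action, with $X_3\leftrightarrow X_3'$ interchanges matched by the sign change; this is again a finite check that I would carry out by letting $s$ and $t$ act on each listed pair.

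For part (ii), the assertion that $\xi_2,\xi_3\in\B^2(\fk_3,\k)$ requires exhibiting, for each of $\xi_2$ and $\xi_3$, a linear functional $f\colon\fk_3\to\k$ whose Hochschild coboundary equals the given cocycle. Recalling that the coboundary of a $1$-cochain $f$ with trivial coefficients is $(\delta f)(a,b)=\eps(a)f(b)-f(ab)+f(a)\eps(b)$, on $B^+\ot B^+$ this reduces to $(\delta f)(a,b)=-f(ab)$. Hence I would look for $f$ supported in the appropriate degree (degree $2$ for $\xi_2$, degree $4$ for $\xi_3$) so that $-f(ab)$ reproduces the prescribed values on the basis; the candidate $f$ is essentially read off from the relations expressing products of generators in terms of the basis $\mathbb{B}$. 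The main obstacle here, and in the paper overall, is that one must confirm such an $f$ exists consistently across all basis pairs simultaneously—i.e.\ that the prescribed values are genuinely the coboundary of a \emph{single} functional rather than merely matching pair-by-pair—which again comes down to careful use of \eqref{eqn:fk3-intro} and \eqref{RelA} to resolve all product ambiguities coherently.
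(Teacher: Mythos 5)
Your plan for part (i) is essentially the paper's proof: a direct verification of \eqref{eqn:conds-hoch} (which here is nearly automatic, since each $\xi$ is supported in a single bidegree of basis pairs while products of positive-degree elements have degree at least two), followed by a finite check of invariance on supports; for $\xi_3$ the paper reduces this to the values $\eta(x_ix_jx_k,x_j)$ and lets the conjugacy class act, the signs produced by rewriting via \eqref{RelA} matching the signs that distinguish $X_3$ from $X_3'$, exactly as you describe. No issue there.

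For part (ii) your approach is again the paper's --- define $f$ on the basis in the relevant degree and check that its coboundary is the given cocycle --- but the consistency requirement you single out as ``the main obstacle'' is not a routine verification: for $\xi_2$ it genuinely fails. For any linear functional $f\colon\fk_3\to\k$ the coboundary $(\delta f)(a,b)=\eps(a)f(b)-f(ab)+f(a)\eps(b)$, restricted to positive-degree arguments, is $(a,b)\mapsto -f(ab)$, hence factors through the multiplication map; in particular it annihilates $x_0\ot x_1+x_1\ot x_2+x_2\ot x_0$, whose image under multiplication is zero by \eqref{eqn:fk3-intro}. But $\xi_2(x_0,x_1)+\xi_2(x_1,x_2)+\xi_2(x_2,x_0)=3\neq 0$, since all three pairs consist of distinct generators. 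So no $f$ whatsoever has $\delta f=\xi_2$, i.e.\ $\xi_2\notin\B^2(\fk_3,\k)$ with $\xi_2$ as defined. (The paper's own proof trips on exactly this point: its $f_2$ equals $-1$ on the four degree-two basis elements, so $(\delta f_2)(x_1,x_2)=-f_2(x_1x_2)=f_2(x_0x_1)+f_2(x_2x_0)=-2\neq 1=\xi_2(x_1,x_2)$.) By contrast, the $\xi_3$ half does go through along the lines you propose: taking $f_3$ supported on the unique degree-four basis element with $f_3(x_2x_0x_1x_0)=-1$, one checks using \eqref{eqn:fk3-intro} and \eqref{RelA} that $ab=\xi_3(a,b)\,x_2x_0x_1x_0$ for every pair $(a,b)$ of basis elements of total degree four, whence $\delta f_3=\xi_3$. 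In summary, your proposal proves (i) and the $\xi_3$ part of (ii), but the $\xi_2$ part cannot be completed by this method --- nor, it appears, by any other, unless the definition of $\xi_2$ (or the statement) is amended.
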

\pf
{\it (a)} Direct calculations show that all three maps $\xi_0, \xi_2$ and $\xi_{3}$ verify \eqref{eqn:conds-hoch}; hence they belong to $\in \Z^2(\fk_3,\k)$.
As well, it is clear that $\xi_0$ and $\xi_2$ are $\mathbb{G}_{3,\ell}$-linear.

As for $\eta=\xi_3$, it is sufficient to show that the linearity is fulfilled in values of the form $\eta(x_ix_jx_k,x_j)$ with $i,j,k\in X$, all different. We get:
\begin{align*}
	\eta(g_i\cdot (x_ix_jx_k),g_i\cdot x_j)&=\eta(x_{i\triangleright i}x_{i\triangleright j}x_{i\triangleright k},x_{i\triangleright j})=\eta(x_{i}x_{k}x_{j},x_{k}),
\end{align*}
same for $g_j, g_k$. 
In this way we see that $\xi_3\in \Z^2(\fk_3,\k)^{\mathbb{G}_{3,\ell}}$ as well.

{\it (b)} We see that $\xi_2$ and $\xi_3$ are coboundaries by considering the linear maps $f_2,f_3:\fk_3\to \k$ defined on the basis $\mathbb{B}$ as 
\[
f_2(x_ix_j)=-\xi_2(x_i,x_j), \qquad f_3(x_0x_1x_0x_2)=\xi_3(x_1x_0x_2,x_0),
\] and zero elsewhere.
\epf

Next result shows that the cocycles from Lemma \ref{lem:cocycles-xib}, together with the counit map $\veps$, generate all $\mathbb{G}_{3,\ell}$-invariant cocycles in $\Z^2(\fk_3,\k)$.

\begin{proposition}\label{lemprop:hochschild-pointed}
	$\Z^2(\fk_3,\k)^{\mathbb{G}_{3,\ell}}$ is generated  over $\k$ by $\veps,\xi_0,\xi_2,\xi_3$.
\end{proposition}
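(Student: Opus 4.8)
The plan is to prove the reverse inclusion: any $\eta \in \Z^2(\fk_3,\k)^{\mathbb{G}_{3,\ell}}$ lies in $\k\langle \veps, \xi_0, \xi_2, \xi_3\rangle$. Since the preceding Lemma places $\xi_0,\xi_2,\xi_3$ in $\Z^2(\fk_3,\k)^{\mathbb{G}_{3,\ell}}$, $\veps$ is obviously there too, and the four maps have pairwise disjoint supports on $\mathbb{B}\times\mathbb{B}$ (hence are linearly independent), this gives equality. First I would exploit \eqref{eqn:conds-hoch}: the balancing relation $\eta(a,bc)=\eta(ab,c)$ pushes the last generator of the right-hand argument into the left-hand one, so $\eta$ is determined by the scalars $\eta(b,x_i)$, $b\in\mathbb{B}$, $i\in X$, together with $\eta(1,1)$; the latter is exactly the $\veps$-component, so after subtracting $\eta(1,1)\veps$ we may assume $\eta$ is supported on $\fk_3^+\otimes\fk_3^+$. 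Not every choice of the values $\eta(b,x_i)$ defines a cocycle: well-definedness forces linear relations obtained by evaluating the balancing rule on the defining relations \eqref{eqn:fk1} and \eqref{eqn:fk3}, namely $\eta(b x_i, x_i)=0$ from $x_i^2=0$, and $\sum_\beta \eta(b\,u_\beta, x_{i_\beta})=0$ from each quadratic relation $R=\sum_\beta u_\beta x_{i_\beta}$, for all $b\in\mathbb{B}$. These are the consistency conditions I must carry along.

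Next, both the Hochschild differential and the $\mathbb{G}_{3,\ell}$-action preserve the internal grading of $\fk_3$, so $\Z^2(\fk_3,\k)^{\mathbb{G}_{3,\ell}}$ splits as a direct sum over the total degree $n=\deg a+\deg b$ of the arguments, and I can work one $n$ at a time. On each homogeneous component the action of \cite[Theorem 4.4]{GS} coming from the realization \eqref{eqn:real-k} reduces to the standard action of $\s_3$ on the indices $X$, twisted by the sign $\sgn(g)^{\deg}$: the elements $s$ and $t^2$ produce the cyclic rotations of $\{0,1,2\}$ while $t$ produces a transposition carrying the factor $-1$ on each degree-one generator. In particular the invariants are independent of $\ell$, matching the statement, and it suffices to compute the sign-twisted $\s_3$-invariants. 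Degree $n=0$ only sees $\eta(1,1)$ (absorbed into $\veps$) and $n=1$ is killed by normalization, so the first genuine case is $n=2$, where no relation constrains the bidegree-$(1,1)$ block $\eta(x_i,x_j)$: the (even-degree, untwisted) $\s_3$-action has exactly two orbits on ordered pairs $(i,j)$, the diagonal and the off-diagonal, so the invariants are spanned by $\xi_0$ and $\xi_2$.

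For the odd total degrees $n=3$ and $n=5$ invariance asks for the sign-isotypic component, i.e. $\eta(a,b)=\sgn(g)^n\,\eta(g^{-1}\cdot a,g^{-1}\cdot b)$ with $n$ odd. I would check directly on the fundamental data in degree $2$ (the four basis elements $x_2x_0,x_0x_2,x_1x_0,x_0x_1$ against the three generators) and in degree $4$ (the single top element $x_2x_0x_1x_0$ against the three generators) that the consistency conditions together with the sign constraint leave no nonzero solution, so these degrees contribute nothing.

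The main obstacle is the even case $n=4$, which is precisely where $\xi_3$ must appear. Here the fundamental data are the nine values $\eta(b,x_k)$ with $b\in\{x_0x_1x_0,\,x_2x_0x_2,\,x_2x_0x_1\}$, and the bidegrees $(1,3)$, $(2,2)$, $(3,1)$ are tied together by the relations of $\fk_3$. I would impose all consistency conditions (from \eqref{eqn:fk1} and \eqref{eqn:fk3} multiplied on the left by degree-one and degree-two basis elements) together with the untwisted $\s_3$-invariance, and verify that the resulting solution space is one-dimensional, reproducing exactly the support pattern $X_3\cup X_3'$ with the prescribed signs, i.e. spanned by $\xi_3$. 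Solving this coupled linear system across the three bidegrees and confirming the single surviving invariant is $\xi_3$ is the technical heart; the remaining degrees are comparatively immediate. Summing the contributions $1+2+1$ over $n=0,2,4$ and $0$ over $n=3,5$ gives $\dim \Z^2(\fk_3,\k)^{\mathbb{G}_{3,\ell}}=4$, completing the proof.
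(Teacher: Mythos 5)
Your proposal is correct and follows essentially the same route as the paper's proof: both arguments rest on the fact that a Hochschild cocycle is determined by the values $\eta(b,x_i)$, $b\in\mathbb{B}$, together with $\eta(1,1)$, and then cut this finite-dimensional space down using the relations of $\fk_3$ and $\mathbb{G}_{3,\ell}$-linearity (which, as you observe, is the same as sign-twisted $\s_3$-invariance and is independent of $\ell$), the only organizational difference being that you split by total degree while the paper reduces via the orbit $\mathbb{G}_{3,\ell}\cdot x_0=\{x_0,x_1,x_2\}$ to the twelve values $\eta(b,x_0)$ and treats them all at once. One small precision: your consistency conditions $\eta(bx_i,x_i)=0$ and $\sum_\beta\eta(b\,u_\beta,x_{i_\beta})=0$ hold only for $b\in\mathbb{B}^+$, since the balancing rule $\eta(a,bc)=\eta(ab,c)$ requires $a\in\fk_3^+$ — as your degree-two analysis implicitly assumes, for otherwise taking $b=1$ would force $\eta(x_i,x_i)=0$ and wrongly exclude $\xi_0$.
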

\pf
Let $\eta\in \Z^2(\fk_3,\k)^{\mathbb{G}_{3,\ell}}$ be a $\mathbb{G}_{3,\ell}$-linear 2-cocycle. Recall from \S\ref{subsec:hoch2} that $\eta$ is determined by the set of values $\{\eta(b,x_i)\colon b\in\mathbb{B}, i\in X\}\cup\{\eta(1,1)\}$. Moreover, as $\mathbb{G}_{3,\ell}\cdot x_0=\{x_0,x_1,x_2\}$, then in this case $\eta$ is actually determined by  $\eta(1,1)$ and by the other twelve scalars
$\eta(b,x_0)$, $b\in \mathbb{B}$.
We claim that
\begin{enumerate}
	\item $\eta(x_0,x_0)=\eta(x_1,x_1)=\eta(x_2,x_2)$ and $\eta(x_1,x_0)=\eta(x_2,x_0)$,
	\item $\eta(x_1x_0x_2,x_0)=\eta(x_0x_1x_2,x_1)=-\eta(x_0x_1x_0,x_2)$.
	\item $\eta$ is zero for the remaining cases.
\end{enumerate}
The first two items are a direct consequence of $\mathbb{G}_{3,\ell}$-linearity of $\eta$ and relations in $\fk_3$. To see the third, we use that $\eta$ is a Hochschild 2-cocycle, the relations in $\fk_3$ and $\mathbb{G}_{3,\ell}$-linearity. First, notice that  $\eta(x_ix_j,x_k)= 0$ for $j=k$, so we suppose that $\eta(x_ix_j,x_k)\neq 0$; then:
\[-\eta(x_kx_i,x_k)=\eta(x_ix_j,x_k)=\eta(x_i,x_jx_k)=-\eta(x_i,x_kx_i)=-\eta(x_ix_k,x_i);\]
but the $\mathbb{G}_{3,\ell}$-linearity says that $\eta(x_ix_k,x_i)=-\eta(x_kx_i,x_k)$. Hence $\eta(x_ix_j,x_k)=0$. On the other hand, $\eta(x_ix_jx_i,x_k)=0$ for $k=i,j$. Finally, it is clear that $\eta(x,y)=0$ for $\deg x+\deg y\geq 5$.
Thus, $\eta$ is just determined by the scalars
\begin{align*}
	&\eta(1,1),&\eta(x_2,&x_0),& \eta(x_0,&x_0),& \eta(x_0&x_1x_0,x_2).
\end{align*}
We get the conclusion from  the definition of the cocycles in Lemma \ref{lem:cocycles-xib}.
\epf

\subsubsection{Exponentials}

We characterize Hochschild 2-cocycles $\eta \in \Z^2(\fk_3,\k)^{\mathbb {G}_{3,\ell}}$ such that $e^{\eta}$ is a Hopf 2-cocycle. 
We recall from \cite[Lemma 5.3]{GS} that we can assume that $\eta(1,1)=0$. So, we can restrict to cocycles of the form
\begin{align}\label{eq:eta-fk3diagonal}
	\eta=\eta_0\xi_0+\eta_2\xi_2+\eta_{3}\xi_{3},
\end{align}
for the scalars $\eta_i\coloneqq\eta(x_i,x_0)$, $i=0,2$ and $\eta_{3}\coloneqq\eta(x_0x_1x_0,x_2)$.

Recall that a sufficient condition is for $\eta$ to verify \eqref{eqn:conm1}. We shall show in Proposition \ref{pro:fk3-converse}, where a complete characterization of these cocycles is established, that this is not necessary. Indeed, it is easy to obtain a necessary condition for $\eta$ to verify the aforementioned conditions:
\begin{lemma}
	Let $\eta\in \Z^2(\fk_3,\k)^{\mathbb{G}_3,\ell}$ be such that it satisfies \eqref{eqn:conm1}. Then 
	\[
	\eta\in \C\coloneqq\{\eta_{0}(\xi_0+\xi_2),\, \eta_{3}\xi_{3}:\eta_0,\eta_{3}\in\k\}.
	\]
\end{lemma} 
We mention that the converse is also true:  when $\eta=\xi_0+\xi_2$, then $e^\eta$ is a Hopf cocycle by \cite[Lemma 2.3.]{GaM}. However, the alternative $\eta=\xi_3$ requires a lengthy case-by-case analysis that we choose to omit.
\pf
Suppose that $\eta$ verifies \eqref{eqn:conm1}. In particular, the first identity is
\begin{align}\label{eq:etaconm1}
	\eta(x,y_{(1)}z_{(1)})\eta(y_{(2)},z_{(2)})=\eta(x,y_{(2)}z_{(2)})\eta(y_{(1)},z_{(1)}),
\end{align}
for any $x,y,z\in \fk_3$. Next we distinguish two cases:  $\eta_{0}=\eta(x_2,x_0)\neq 0$ and $\eta_{0}=\eta(x_2,x_0)=0$.

If $\eta(x_2,x_0)\neq 0$, then we set $y=x_2x_0$, $z=x_0$ and from \eqref{eq:etaconm1} we get
$\eta(x,x_1)=\eta(x,x_0)$. Now if we put $x=x_1x_2x_1$, then $\eta(x_1x_2x_1,x_0)=0$, so by $H$-linearity  $\eta(x_0x_1x_0,x_2)=0$. Instead, if $x=x_0$, then $\eta(x_0,x_1)=\eta(x_0,x_0)$, moreover $\eta(x_2,x_0)=\eta(x_0,x_0)$.
Thus, $\eta=\eta_{0}(\xi_0+\xi_2)$.



On the other hand, assume $\eta(x_2,x_0)=0$. Let $i,j,k\in X$ be all different and set $y=x_ix_j$,  $z=x_i$, then \eqref{eq:etaconm1} is
$\eta(x,x_k)\eta(x_i,x_i)=0$. In particular, if $x=x_k$, then this gives $\eta\in\k\{\xi_3\}$.
\epf

	Next we give a full characterization of Hochschild cocycles that can be exponentiatied to produce a Hopf cocycle.
	
	\begin{proposition}\label{pro:fk3-converse}
		Fix $\eta\in \Z^2(\fk_3,\k)^{\G_{3,\ell}}$. Then $e^\eta\in Z^2(\fk_3)$ if and only if
		\begin{align*}
			\eta \in \overline{\C}\coloneqq\{\eta_0(\xi_0+\xi_2)+\eta_{3}\xi_{3} :\eta_0,\eta_{3}\in\k\}.
		\end{align*}
	\end{proposition}
	
	We extract from the proof a technical lemma, for a cleaner exposition.
	\begin{lemma}\label{lem:tecnical-p}
		Let $\eta\in\overline{\C}$. If $a,b\in\mathbb{B}^+$, then 
		\begin{align*}
			e^{\eta}(a,b)=\begin{cases}
				\eta(a,b), & \text{if }\deg(a)+\deg(b)< 6,\\
				\frac{1}{2}\eta^{\ast 2}(a,b), & \text{if }\deg(a)+\deg(b)\geq  6.
			\end{cases}
		\end{align*}
		Moreover, if $\eta=\eta_0(\xi_0+\xi_2)+\eta_{3}\xi_{3}$, then it is given by the table:
		\begin{center}
			\begin{table}[H]
				\resizebox{12cm}{!}
				{\begin{tabular}{|c|c|c|c|c|c|c|c|c|c|c|c|}
						\hline
						$e^{\eta}$  & $x_0$ & $x_1$ & $x_2$ & $x_0x_1$ & $x_0x_2$& $x_1x_0$& $x_1x_2$ & $x_0x_1x_0$ & $x_0x_1x_2$ & $x_1x_0x_2$ & $x_0x_1x_0x_2$\\
						\hline
						$x_0$  & $\eta_0$ & $\eta_0$ & $\eta_0$ &  &  & &  &  &  & $\eta_{3}$ & \\
						\hline
						$x_1$  & $\eta_0$ & $\eta_0$ & $\eta_0$ &  &  & &  &  & $\eta_{3}$ &  & \\
						\hline
						$x_2$  & $\eta_0$ & $\eta_0$ & $\eta_0$ &  &  & &  & $-\eta_{3}$ &  &  & \\
						\hline
						$x_0x_1$  &  &  &  &  & $\eta_{3}$ &  &  &  &  &  & \\
						\hline
						$x_0x_2$  &  &  &  & $\eta_{3}$ &  & & $-\eta_{3}$ &  &  &  & \\
						\hline
						$x_1x_0$  &  &  &  &  &  & & $\eta_{3}$ &  &  &  & \\
						\hline
						$x_1x_2$  &  &  &  &  & $-\eta_{3}$ & $\eta_{3}$ &  &  &  &  & \\
						\hline
						$x_0x_1x_0$  &  &  & $\eta_{3}$ &  &  & &  &  &  &  & \\
						\hline
						$x_0x_1x_2$ &  & $-\eta_{3}$ &  &  &  & &  &  &  &  & \\
						\hline
						$x_1x_0x_2$  & $-\eta_{3}$ &  &  &  &  &  &  &  &  &  & \\
						\hline
						$x_1x_0x_1x_2$  &  &  &  &  &  & &  &  &  &  & $-\eta_{3}^2$\\
						\hline
				\end{tabular}}
			\end{table} 
		\end{center}
	\end{lemma}

	\begin{remark}\label{rem:sigma-exp}
		If $\eta=\eta_0(\xi_0+\xi_2)+2\eta_{0}^2\xi_{3}$, then $e^\eta=\sigma_{3\eta_0,\eta_0}$. This follows by comparing the table in the Lemma with that of Theorem \ref{cociclodiagonal} (for $d=0$).
		In particular, this exponential $e^\eta$ defines a Hopf cocycle.
	\end{remark}
	
	\pf
	We start with a numerical observation: we have that $e^{\eta}(a,b)=\eta(a,b)$ when $\deg a+\deg b$ is neither $6$ nor $8$. Indeed, for $B=\fk_3$ we have $B=\bigoplus_{n= 0}^4B_n$ as a graded Hopf algebra, so \cite[Remark 5.4]{GS} implies
	$e^{\eta}= \epsilon+\eta+\frac{1}{2!}\eta^{\ast 2}+\frac{1}{3!}\eta^{\ast 3}+\frac{1}{4!}\eta^{\ast 4}$.
	
	Moreover, notice that, for $1\leq n,m\leq 4$:
	\begin{itemize}
		\item $\eta_{|B_n\ot B_m}=0$ if $(n,m)\notin\{(1,1),(1,3),(2,2),(3,1)\}$.
	\end{itemize}
	Hence, we can conclude that:
	\begin{itemize}
		\item $\eta^{\ast 2}|_{B_n\ot B_m}=0$ if $(n,m)\notin\{(2,2),(2,4),(4,2),(3,3)\}$.
		\item $\eta^{\ast 3}|_{B_n\ot B_m}=0$ if $(n,m)\notin\{(2,4),(4,2),(3,3), (4,4)\}$.
		\item $\eta^{\ast 4}|_{B_n\ot B_m}=0$ if $(n,m)\neq(4,4)$.
	\end{itemize}
	Indeed, the argument of these conclusions is the same in each case, so we show only for $\eta^{\ast 2}|_{B_n\ot B_m}=0$. Notice that
	\begin{align*}
		\Delta^{2}(B_n\ot B_m)\subseteq\sum_{\substack{n_1+n_2=n\\
				m_1+m_2=m}} B_{n_1}\ot B_{m_1}\ot B_{n_2}\ot B_{m_2}
	\end{align*}  
	Let $a\in B_n$ and $b\in B_m$ be two element of positive degree, we know that $\eta^{\ast 2}(a,b)=\eta(a_{(1)}, b_{(1)})\eta(a_{(2)}, b_{(2)})$ has possibly non-zero values only when $a_{(1)}\ot b_{(1)}\ot a_{(2)}\ot b_{(2)}$ is an element of $B_{s}\ot B_{t}\ot B_{v}\ot B_{w}$ where 
	$(s,t,v,w)$ is in 
	\[\{(1,1,1,1),(1,1,1,3),(1,1,2,2), (1,1,3,1),(1,3,1,1),(2,2,1,1),(3,1,1,1)\}.\]
	This can only occur when $(n,m)\in\{(2,2),(2,4),(4,2),(3,3)\}$.
	
	\medbreak 
	Now observe that when $\deg a=\deg b=2$, it follows $\underline{\Delta}(a)$ and $\underline{\Delta}(b)$ are alternating sums of terms of the form $\pm x_i\ot x_j$, $i\neq j\in X$, which gives $\eta^{\ast 2}(a,b)=0$. 
	
	We remark that a quick look at \eqref{eqn:coprod1} shows that this propagates to higher degrees: namely that for $a$ of degree $n$ we have $\underline{\Delta}^{(n-1)}(a)\in B_1^{\ot n}$ is an alternating sum as well. Hence $\eta^{\ast n}(a,b)=0$ when $n=\deg a=\deg b\in\{3,4\}$.
	
	Hence we need to deal with the cases $(\deg a,\deg b)\in \{(2,4),(4,2),(4,4)\}$.
	
	When 
	$(\deg a,\deg b)\in \{(2,4),(4,2)\}$, we have that $e^{\eta}(a,b)=\frac12\eta^{\ast2}(a,b)$ by definition and Remark \cite[Remark 5.4]{GS}.
	
	
	
	Finally we verify $\eta^{\ast 3}(a,b)=0$ for $a=b=x_0x_1x_0x_2$:  for this, we show that $\eta^{\ot 3}\pi_i\Delta^2(a\ot b)=0$, for each one of the linear projections 
	$\pi_1\colon B^{\ot 6}\to B_2^{\ot 2}\ot B_1^{\ot 2}\ot B_1^{\ot 2}$, 
	$\pi_2\colon B^{\ot 6}\to B_1^{\ot 2}\ot B_2^{\ot 2}\ot B_1^{\ot 2}$ and 
	$\pi_3\colon B^{\ot 6}\to B_1^{\ot 2}\ot B_1^{\ot 2}\ot B_2^{\ot 2}$ onto the support of $\eta^{\ot 3}$.
	Indeed, for $\pi_1$ we obtain a sum of expressions of the form $\eta(x_ix_j,x_lx_m)\eta_0^2-\eta(x_ix_j,x_lx_m)\eta_0^2$ with $i\neq j,l\neq m\in X$. Next, for $\pi_2$ we obtain:
	\begin{align*}
		\eta(a_{(1)},&b_{(1)})\eta(a_{(2)},b_{(2)})\eta(a_{(3)}, b_{(3)})=\eta_0^2[-\eta(x_1x_0,x_2x_0)-\eta(x_1x_0,x_1x_2)\\	&-\eta(x_2x_0,x_2x_1)-\eta(x_0x_1,x_2x_1)+\eta(x_0x_1,x_2x_1)\\	&-\eta(x_2x_1,x_2x_0)+\eta(x_2x_1,x_2x_0)-\eta(x_2x_1,x_0x_1)\\
		&+\eta(x_0x_1,x_0x_2)-\eta(x_0x_2,x_1x_2)-\eta(x_0x_2,x_0x_1)\\
		&+\eta(x_0x_2,x_1x_2)+\eta(x_2x_0,x_1x_0)-\eta(x_2x_0,x_1x_0)]=0.
	\end{align*}
	The computation for $\pi_3$ is similar to that of $\pi_2$. This shows the first claim.
	
	\medbreak
	
	The table now follows by checking that $\eta^{\ast 2}(a,b)=0$ whenever $a$ or $b$ are different from the longest element $x_0x_1x_0x_2$, where the result is $-2\eta_3^2$.
	This is a tedious by-hand computation, that can also be checked with \texttt{eta2-pointed.g} using $\gap$.
	\epf
	
	Next we prove Proposition \ref{pro:fk3-converse}.
	\pf
	Let $\eta\in \Z^2(\fk_3,\k)^{\G_{3,\ell}}$ be as in \eqref{eq:eta-fk3diagonal} and assume that $e^\eta$ is a Hopf 2-cocycle. 
	Notice that $e^\eta(x_{1}x_{0},x_{1}x_{0})=\frac{1}{2}(-\eta_2\eta_0+\eta_2\eta_2)$ by definition while we get  $e^\eta(x_{1}x_{0},x_{1}x_{0})=-\eta_2\eta_0+\eta_2\eta_2$ from the decomposition formula \eqref{eqn:dec-form2}. Then $\eta_2(\eta_2-\eta_0)=0$.  
	Similarly, from $e^\eta(x_{1}x_{0},x_{2}x_{0})$ we get $\frac{1}{2}(-\eta_0\eta_0+\eta_2\eta_2)=0$. 
	We conclude that $\eta_0=\eta_2$ and therefore $\eta\in \overline{\C}$.
	
	Conversely, let us fix $\eta=\eta_0(\xi_0+\xi_2)+\eta_{3}\xi_{3}\in\overline{\C}$. If $\eta_3=2\eta_0^2$, then $e^\eta$ is the Hopf cocycle $\sigma_{3\eta_0,\eta_0}$, as seen in Remark \ref{rem:sigma-exp}. For the general case, we observe that:
	\[
	e^\eta=\alpha_{d}\rightharpoonup\sigma_{3\eta_0,\eta_0}\in Z^2(\fk_3), \qquad d=2\eta_0^2-\eta_{3},
	\]
	for $\alpha_d\in U_H(\fk_3^{\ast})$ as in Lemma \ref{lem:alpha}. Indeed, this follows by comparing the table for $e^\eta$ in Lemma \ref{lem:tecnical-p} and the orbit 
	$\alpha_d\ra\sigma_{3\eta_0,\eta_0}$ as computed in Theorem \ref{cociclodiagonal}. 
	\epf

	\begin{remark}\label{rem:inclusion-strict-pointed}
		We have observed that $\C\subset \overline{\C}$ and this inclusion is strict. However, we have also seen that $\xi_2,\xi_3\in \B^2(\fk_3,\k)$, so $\eta_0\xi_2+\eta_{3}\xi_{3}\in \B^2(\fk_3,\k)$. Thus $\C\equiv\overline{\C}$ up to coboundaries. 
	\end{remark}

	Now we can rapidly characterize pure Hopf cocycles.
	
	\begin{theorem}\label{thm:fk3-sigma-pure}
		Let $\sigma_{\lambda,\mu}$ be a 2-cocycle as in Theorem \ref{cociclodiagonal}. Then the following are equivalent:
		\begin{enumerate}[label=(\alph*)]
			\item $\sigma_{\lambda,\mu}\sim e^\eta$ for some $\eta\in \Z^2(\fk_3,\k)^{\mathbb{G}_3,\ell}$.
			\item $\mu=\ul$.
		\end{enumerate}
	\end{theorem}
	\pf 
	{\it (a)} $\Rightarrow${\it (b)}. Let $d\in\k$ and $\eta\in \Z^2(\fk_3,\k)^{\mathbb{G}_3,\ell}$ be such that $\sigma_{\lambda,\mu}=\alpha_d\ra e^\eta$. 
	In particular $e^\eta=\alpha_{-d}\ra\sigma_{\lambda,\mu}$ is a Hopf cocycle, namely $\eta\in\overline{\C}$ and thus there are $\eta_0,\eta_3\in \k$ such that $\eta=\eta_0(\xi_0+\xi_2)+\eta_3\xi_3$. 
	By comparing the tables for $\alpha_{-d}\ra\sigma_{\lambda,\mu}$ in Theorem \ref{cociclodiagonal} and for $e^\eta$ in Lemma  \ref{lem:tecnical-p}, it follows that
	$\ul=\mu=\eta_0$ (and $d=\eta_3-2\eta_0^2$).
	
	{\it (b)} $\Rightarrow${\it (a)}. This is Remark \ref{rem:sigma-exp}, as $\sigma_{\lambda,\mu}=\sigma_{\lambda,\ul}=e^\eta$, for $\eta=\ul(\xi_0+\xi_2)+2\ul^2\xi_{3}$.
	\epf

	The following remark deals with the apparent difference between the sets $\C$ and $\overline{\C}$ of Hochschild cocycles leading to Hopf cocycles under exponentiation. This motivates Question \ref{q}, along with similar results in \cite{GS} and the next section.

	\begin{remark}\label{rem:question-p}
		It follows from the proof of Theorem \ref{cociclodiagonal} that $\ex(\C)=\ex(\overline{\C})$ in $H^2(\fk_3)$. 
		Indeed, if $\eta=\eta_0(\xi_0+\xi_2)+\eta_3\xi_3\in\overline{\C}$, then $e^\eta=\alpha_{d}\ra\sigma_{3\eta_0,\eta_0}$, for $d=2\eta_0^2-\eta_3$. Now, $\alpha_{\eta_3}\ra e^\eta=e^\xi$, for $\xi=\eta_0(\xi_0+\xi_2)\in \C$. The reverse inclusion is automatic.
	\end{remark}
	
	Now we look at these results in the Hopf algebra setting, to investigate the range of exponential cocycles, or the ubiquity of pure ones. When $\ell=1$, then exponential cocycles suffice, see Corollary \ref{cor:s3-intro}. The situation changes for bigger $\ell$.
	
	\begin{corollary}\label{cor:pointed-exponential}
		Assume $\ell>1$. Then $A_{\lambda,\mu}$ is isomorphic to a nontrivial exponential cocycle deformation of $A=\fk_3\#\k\G_{3,\ell}$ only if $[\lambda:\mu]=[3:1]$ in $\mathbb{P}^1$.
	\end{corollary}
	\pf
	Let $A=\fk_3\#\k\G_{3,\ell}$ and $\sigma=e^\eta\#\eps\in Z^2(A)$, for some $\eta\in\Z^2(\fk_3,\k)^{\G_{3,\ell}}$. In particular, $\eta\in\overline{\C}$ by Proposition \ref{pro:fk3-converse} and $\sigma\sim\sigma_{\lambda,\mu}$ with $\mu=\lambda/3$ by Theorem \ref{thm:fk3-sigma-pure}. Hence $A_{\lambda,\mu}\simeq A_{3,1}$.
	\epf

\section{Deformations over $\k^{\s_3}$}\label{sec:copointed}

In this section we fix $H=\k^{\s_3}$ and compute the Hopf 2-cocycles associated to the deformations of $A=\fk_3\# H$; recall realization \eqref{eqn:coreal}. We follow the lines of Section \ref{sec:pointed}, {\it mutatis mutandis}. Once again we recall  relation \eqref{RelA}.

\subsection{Hopf cocycles}

The cleft objects for the Nichols algebra $\fk_3$ in $H\mod$ are the algebras $\mE_{{\bf c}}$, ${\bf c}\coloneqq(c_0,c_1,c_2)$, generated by $y_0,y_1,y_2$ with relations:
\begin{align*}
	y_i^2&=c_i,\,i\in X, & 
	y_1y_0+y_0y_2+y_2y_1=y_0y_1+y_1y_2+y_2y_0&=0.
\end{align*}
See \cite{GIV1} for details. By construction, this algebra inherits a basis $\mathbb{B}$ as in \eqref{eqn:basis}.
In this setting, we observe the following relations, for $i,j,k\in X$ pairwise different:
\begin{align}\label{s3-relH2}
	y_iy_ky_j&=y_jy_ky_i, &y_iy_jy_i&=y_jy_iy_j+c_jy_k-c_iy_k=-y_jy_ky_i-c_iy_k.
\end{align}
Once again, $\mE_{{\bf c}}$ is a (right) $\fk_3$-comodule algebra  with coaction $\rho$ as in \eqref{eqn:coaction}.

We start by computing the corresponding section $\gamma_{\cc}\colon \fk_3\to\mE_{{\bf c}}$.

\begin{lemma}\label{lem:gamma-copointed}
	Let $i,j,k\in X$ be all different.
	\begin{enumerate}[leftmargin=*,label=(\alph*)]
		\item  The section $\gamma\coloneqq\gamma_{\cc}\colon \fk_3\to \mE_{{\bf c}}$ is determined by:
		\begin{align*}
			\gamma(x_i)&=y_{i}, & \gamma(x_{i}x_{j})&= y_{i}y_{j}, &
			\gamma(x_0x_1x_0)&=y_0y_1y_0+c_0y_2, \\
			\gamma(x_0x_1x_2)&=y_0y_1y_2, & \gamma(x_1x_0x_2)&=y_1y_0y_2, &
			\gamma(x_0x_1x_0x_2)&=y_0y_1y_0y_2.
		\end{align*}
		\item The inverse $\gamma^{-1}:\fk_3\to \mE_{{\bf c}}$ is given by 
		\begin{align*}
			\gamma^{-1}(x_ix_j)&=-y_{j}y_{k}, & \gamma^{-1}(x_0x_1x_0x_2)&=y_{0}y_{1}y_{0}y_{2}+c_{2}c_{0}-c_{2}c_{1}+c_{0}c_{1},
		\end{align*}
		and $\gamma^{-1}(b)=-\gamma(b)$, for $b\in\BB$ with positive odd degree.
	\end{enumerate}
\end{lemma}

\begin{proof}
	(a)  We begin with the computation of the coproduct of every $b\in \mathbb{B}$. Then we define $\gamma(b)$ so that $\rho\circ\gamma(b)=(\gamma\ot \id)\circ\Delta(b)$.
	Let $i,j,k\in X$ be all different:
	\begin{align}
		\notag \underline{\Delta}(x_ix_j)=&x_i\ot x_j-x_j\ot x_k.
		\\
		\label{eqn:coprod2}\underline{\Delta}(x_ix_jx_i)=&
		x_jx_i\ot x_j+x_i\ot x_ix_{k}+x_ix_j\ot x_i+x_i\ot x_jx_i-x_j\ot x_{k}x_i.
		\\
		\notag \underline{\Delta}(x_0x_1x_0x_2)=&x_0x_1x_0\ot x_{2}-x_0x_1x_2\ot x_{1}-x_1x_0x_2\ot x_{0}+x_0\ot x_{1}x_{0}x_{2}
		\\
		\notag &+x_1\ot x_{0}x_{1}x_{2}-x_2\ot x_{0}x_{1}x_{0}+x_0x_1\ot x_{0}x_{2}+x_0x_2\ot x_{0}x_{1}\\
		\notag &+x_1x_0\ot x_{1}x_{2}+x_1x_2\ot x_{1}x_{0}+x_1x_2\ot x_{0}x_{2}+x_0x_2\ot x_{1}x_{2}.
	\end{align}
	
	It readily follows that $\gamma(x_i)=y_i$,  and $\gamma(x_ix_j)=y_iy_j$.
	Notice that \eqref{RelA} gives $\Delta(x_ix_jx_i)=\Delta(x_jx_ix_j)=-\Delta(x_ix_kx_j)$.
	On the other hand, we have that
	\[
	\rho(y_iy_jy_i)-(\gamma\ot \id)\Delta(x_ix_jx_i)=y_iy_jy_i\ot 1-c_i\ot x_{k}-\gamma(x_ix_jx_i)\ot 1,
	\]
	hence, by \eqref{s3-relH2}:
	$\gamma(x_ix_jx_i)=y_iy_jy_i+c_iy_{k}=-y_iy_ky_j$.
	
	Recall as well that $x_2x_1x_2=-x_1x_0x_2$ and $x_2x_0x_2=-x_0x_1x_2$, which gives  $\gamma(x_1x_0x_2)=y_1y_0y_2$ and $\gamma(x_0x_1x_2)=y_0y_1y_2$.
	Finally, a straightforward calculation gives $\rho(y_0y_1y_0y_2)-(\gamma\ot \id)\Delta(x_0x_1x_0x_2)=0$. Therefore $\gamma(x_0x_1x_0x_2)=y_0y_1y_0y_2$.
	
	It is readily seen that $\gamma$ is $\k^{\s_3}$-linear.
	
	(b) The computation of $\gamma^{-1}$ follows as in Lemma \ref{lem:gamma-pointed}.
\end{proof}

As in the pointed case, the following result follows using \gap. We refer, as well, to \cite{Stesis} for a list of computations by hand. For $\cc=(c_0,c_1,c_2)$, we recall the notation $\alpha_1=c_0-c_2$, $\alpha_2=c_0-c_1$. Again, empty slots correspond to zero. We set
\begin{align*}
	p_{\cc}(d)&\coloneqq \alpha_1\alpha_2-c_0^2+d, & q_{\cc}(d)&=-c_0^2c_1c_2-dp_{\cc}(d).
\end{align*}
\begin{theorem}\label{cocicloCOdiagonal}
	The class $[\sigma_{{\bf c}}]$ of the Hopf cocycle associated to $\mE_{{\bf c}}$ is given by:
	\begin{center}\begin{table}[H]
			\resizebox{12cm}{!}
			{\begin{tabular}{|c|c|c|c|c|c|c|c|c|c|c|c|}
					\hline
					\multirow{2}{*}{$[\sigma_{\cc}]$}  & \multirow{2}{*}{$x_0$} & \multirow{2}{*}{$x_1$} & \multirow{2}{*}{$x_2$} & \multirow{2}{*}{$x_0x_1$} & \multirow{2}{*}{$x_0x_2$} & \multirow{2}{*}{$x_1x_0$} & \multirow{2}{*}{$x_1x_2$} & \multirow{2}{*}{$x_0x_1x_0$} & \multirow{2}{*}{$x_0x_1x_2$} & \multirow{2}{*}{$x_1x_0x_2$} & \multirow{2}{*}{$x_0x_1x_0x_2$}\\
					&&&&&&&&&&&\\
					\hline
					\parbox[t]{.4cm}{$x_0$\\ } & \multirow{2}{*}{$c_0$} &  &  &  & &  &  &  &  &  \multirow{2}{*}{$-d$} & \\
					\hline
					$x_1$ &  & \multirow{2}{*}{$c_1$} &  &  &  &  &  &   & \parbox[t]{1cm}{$c_2\alpha_1$\\ \hspace*{.4cm} $-d$} &  &  \\
					\hline
					$x_2$ &  &  & \multirow{2}{*}{$c_2$} &  &  &  &  & \parbox[t]{1cm}{$-c_1\alpha_1$\\ \hspace*{.4cm} $+d$} &  &  & \\
					\hline
					\parbox[t]{.7cm}{$x_0x_1$\\} &  &  &  &  & \multirow{2}{*}{$-d$} & \multirow{2}{*}{$c_0c_1$} &  &  &  &  & \\
					\hline
					$x_0x_2$ &  &  &  &  &  &  & \parbox[t]{1cm}{$-c_0c_2$\\ \hspace*{.3cm} $+d$} &  &  &  & \\
					\hline
					$x_1x_0$ &  &  &  & \multirow{2}{*}{$c_0c_1$} &  &  & \parbox[t]{1cm}{$c_2\alpha_1$\\ \hspace*{.3cm} $-d$} &  &  &  & \\
					\hline
					$x_1x_2$ &  &  &  &  & \parbox[t]{1cm}{$-c_0c_2$\\ \hspace*{.2cm} $+d$} & \parbox[t]{1cm}{$c_2\alpha_1$\\ \hspace*{.2cm} $-d$} &  &  &  &  & \\
					\hline
					$x_0x_1x_0$ &  &  & \parbox[t]{1cm}{$c_0c_2$\\ \hspace*{.2cm} $-d$} &  &  &  &  & \multirow{2}{*}{$c_0c_1c_2$} &  &  & \\
					\hline
					$x_0x_1x_2$ &  & \parbox[t]{1cm}{$-c_0c_1$\\ \hspace*{.2cm} $+d$} &  &  &   &  &  &  & \multirow{2}{*}{$c_0c_1c_2$} &  & \\
					\hline
					\parbox[t]{1.1cm}{$x_1x_0x_2$ \\ } & \multirow{2}{*}{$p_{\cc}(d)$} &  &  &  &  &  &  &  &  & \multirow{2}{*}{$c_0c_1c_2$} & \\
					\hline
					\parbox[t]{1.5cm}{$x_0x_1x_0x_2$ \\}&  &  &  &  &   &  &  &  &  &  & \multirow{2}{*}{$q_{\cc}(d)$}\\
					\hline
			\end{tabular}}
		\end{table}
	\end{center}
\end{theorem}
\pf
Follows as Theorem \ref{cociclodiagonal}: we compute the cocycle $\sigma_{\cc}$ using \gap, as described in \eqref{eqn:gap-cocycle}; we use the definition of $\gamma_{\cc}$ from Lemma \ref{lem:gamma-copointed} above. 
As for the orbit $\alpha_d\ra\sigma_{\cc}$, $d\in\k$, we follow the recipe described in \eqref{eqn:gap-orbit}.

These computations are performed in \texttt{cocycle-copointed.g}. 
\epf

\subsection{Hochschild cocycles}\label{subsec-hoch-copointed}

Recall from \eqref{eqn:etabi} the definition of the Hochschild 2-cocycles
$\xi_j^i\in \Z^2(\fk_3,\k)$, $i,j\in X$ and the cocycle $\xi_3\in \B^2(\fk_3,\k)$ from Lemma \ref{lem:cocycles-xib}.
It is easy to see that $\xi_j^i$ is a coboundary if and only if $i\neq j$.

\begin{proposition}
	$\Z^2(\fk_3,\k)^{\k^{\s_3}}$ is generated  over $\k$ by $\veps,\,\xi_0^0,\,\xi_1^1,\,\xi_2^2$ and $\xi_3$. In particular if $\eta\in\Z^2(\fk_3,\k)^{\k^{\s_3}}$, then it is of the form
	\begin{align*}
		\eta=\kappa\veps+\eta_0\xi_0^0+\eta_1\xi_1^1+\eta_2\xi_2^2+\eta_3\xi_3
	\end{align*}
	where
	$\kappa\coloneqq\eta(1,1)$, $\eta_i\coloneqq\eta(x_i,x_i)$, $i\in X$, and $\eta_3\coloneqq\eta(x_0,x_1x_0x_2)$.
\end{proposition}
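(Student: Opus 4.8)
The plan is to mirror the structure of the proof of Proposition \ref{lemprop:hochschild-pointed} for the pointed case, adapting it to the copointed setting where $H=\k^{\s_3}$ acts on $\fk_3$ via $\delta_l\cdot x_i=\delta_{l,i}x_i$ (recalling the identification $X\leftrightarrow\O_2^3$). The first step is to use the general reduction from \S\ref{subsec:hoch2}: a Hochschild $2$-cocycle $\eta\in\Z^2(\fk_3,\k)$ is completely determined by the scalars $\{\eta(b,x_i)\colon b\in\mathbb{B},\,i\in X\}$ together with $\eta(1,1)$, because of conditions \eqref{eqn:conds-hoch}. So I would first write $\kappa=\eta(1,1)$ and then analyze the twelve families of values $\eta(b,x_i)$, $b\in\mathbb{B}$.

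The key difference from the pointed case is that the $\k^{\s_3}$-action is \emph{diagonal} rather than permuting the $x_i$: the orbit of $x_0$ is just $\{x_0\}$, not $\{x_0,x_1,x_2\}$. Hence the degree-one diagonal values $\eta(x_i,x_i)=\eta_i$ need \emph{not} coincide, which is why three independent cocycles $\xi_0^0,\xi_1^1,\xi_2^2$ appear instead of the single $\xi_0$. I would argue that $\k^{\s_3}$-linearity forces $\eta$ to vanish on any pair $b\ot x_i$ whose total $\s_3$-degree (i.e.\ the relevant product of idempotent supports) is incompatible; concretely, $\k^{\s_3}$-linearity reads $\eta(\delta_l\cdot b,\delta_l\cdot x_i)=\delta_l\cdot\eta(b,x_i)$, and since $\delta_l\cdot x_i=\delta_{l,i}x_i$ while $\delta_l$ acts on a homogeneous basis element $b$ by projecting onto its support, this kills all off-diagonal degree-one values ($\eta(x_i,x_j)=0$ for $i\neq j$) and similarly constrains higher-degree entries. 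The off-diagonal vanishing together with the cocycle relations \eqref{eqn:conds-hoch} and the $\fk_3$-relations \eqref{eqn:fk1}--\eqref{eqn:fk3} then propagates to show that the only surviving higher-degree parameter is the degree-three value $\eta(x_1,x_2x_0x_2)$, which I would identify with the $\xi_3$-coefficient $\eta_3$, exactly as in the pointed argument where $\eta(x_2x_0x_1,x_0)$ was the last free scalar.

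The main computational obstacle, and the step I would carry out most carefully, is showing that \emph{all} intermediate-degree values reduce to the listed parameters: that $\eta(x_ix_j,x_k)=0$ and that the degree-$(3,1)$ and degree-$(2,2)$ entries either vanish or collapse onto $\eta_3$ via the Hochschild identity $\eta(a,bc)=\eta(ab,c)$ combined with the quadratic and cubic relations \eqref{RelA}, \eqref{s3-relH2}, \eqref{s3-relH3}. Here one uses, for instance, that $\eta(x_ix_j,x_k)=\eta(x_i,x_jx_k)$ and that $x_jx_k$ can be rewritten through the defining relations so that $\k^{\s_3}$-linearity forces the value to zero, in the same spirit as the chain of equalities $-\eta(x_kx_i,x_k)=\eta(x_ix_j,x_k)=\cdots=-\eta(x_ix_k,x_i)$ used in Proposition \ref{lemprop:hochschild-pointed}.

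Once the reduction is complete, the proof concludes by observing that $\veps,\xi_0^0,\xi_1^1,\xi_2^2,\xi_3$ are themselves $\k^{\s_3}$-linear Hochschild cocycles (the diagonal $\xi_i^i$ are invariant because $\delta_l\cdot(\xi_i^i)=\xi_i^i$, and $\xi_3$ was already shown invariant in the lemma preceding Proposition \ref{lemprop:hochschild-pointed}), so that the assignment $\eta\mapsto(\kappa,\eta_0,\eta_1,\eta_2,\eta_3)$ with $\kappa=\eta(1,1)$, $\eta_i=\eta(x_i,x_i)$ and $\eta_3=\eta(x_1,x_2x_0x_2)$ is a bijection onto $\k^5$. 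This yields the stated spanning set and the explicit normal form for $\eta$.
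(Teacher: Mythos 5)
Your overall strategy---reduce to the values $\eta(b,x_i)$ together with $\eta(1,1)$ via \eqref{eqn:conds-hoch}, exploit $\k^{\s_3}$-invariance as a support (i.e.\ $\s_3$-degree) condition, and finish by checking that the five proposed generators are themselves invariant cocycles---is exactly the paper's, and your qualitative conclusions are all correct. However, the one formula you commit to, the concrete form of $\k^{\s_3}$-linearity, is wrong, and this is not a cosmetic slip: it is the crux of the entire argument. You write the condition as $\eta(\delta_l\cdot b,\delta_l\cdot x_i)=\delta_l\cdot\eta(b,x_i)$. That would be the invariance condition if $\delta_l$ were grouplike, but in $\k^{\s_3}$ it is not: $\Delta(\delta_t)=\sum_{p\in\s_3}\delta_p\ot\delta_{p^{-1}t}$ and $\eps(\delta_t)=\delta_{t,e}$. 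The correct condition, the one the paper works with, is
\[
\sum_{p\in\s_3}\eta(\delta_p\cdot x,\,\delta_{p^{-1}t}\cdot y)=\delta_{t,e}\,\eta(x,y),\qquad x,y\in\fk_3,\ t\in\s_3,
\]
which says precisely that $\eta$ vanishes on any pair of homogeneous elements whose supports multiply to something other than $e$ in $\s_3$.

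The difference matters because the two conditions give incompatible answers on the diagonal. With the correct condition, $\eta(x_i,x_i)$ survives, since $\supp(x_i)\supp(x_i)=i\cdot i=e$ (the $i$'s being transpositions), and $\eta(x_1,x_2x_0x_2)$ survives, since $(13)(12)(13)=(23)$; this is exactly what produces the free parameters $\eta_0,\eta_1,\eta_2,\eta_3$. With your formula, evaluating at $b=x_i$ against $x_i$ gives left side $\delta_{l,i}\,\eta(x_i,x_i)$ and right side $\delta_{l,e}\,\eta(x_i,x_i)$, so taking $l=i$ forces $\eta(x_i,x_i)=0$; more generally your formula demands $\supp(b)=i=e$, which is impossible, so it forces $\eta(b,x_i)=0$ for \emph{every} $b\in\mathbb{B}$ and would ``prove'' that $\Z^2(\fk_3,\k)^{\k^{\s_3}}$ is spanned by $\veps$ alone---contradicting the statement, and contradicting the invariance of $\xi_0^0,\xi_1^1,\xi_2^2$, which you assert without testing against your own formula. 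The rest of your outline (vanishing of $\eta(x_i,x_lx_m)$, collapse of the degree-three entries onto $\eta_3$ via \eqref{eqn:conds-hoch} and the $\fk_3$-relations, the final spanning argument) is sound and matches the paper, but it only goes through once the linearity condition is replaced by the correct Hopf-algebraic one above.
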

\pf
Let $\eta\in \Z^2(\fk_3,\k)^{\k^{\s_3}}$ be a $\k^{\s_3}$-linear Hochschild 2-cocycle. 
Recall form \S\ref{subsec:hoch2} that such $\eta$ is determined by the scalar $\eta(1,1)$ and the other 33 scalars $\eta(x_i,b)$, $i\in X$, $b\in\mathbb{B}^+$.
Next, we discard the values $\eta(x_i,b)$ which are necessarily zero while simultaneously we show that the generators in the statement are $\k^{\s_3}$-linear.
For each $i\in X$ and $b\in \BB$, we analyze the equations
\begin{align*}
	\delta_t\cdot \eta(x_i,b)=\delta_{e,t}\eta(x_i, b)=\sum_{p\in\s_3}\eta(\delta_p\cdot x_i, \delta_{p^{-1}t}\cdot b)=\eta(x_i, \delta_{g_i^{-1}t}\cdot b).
\end{align*}

If $b=x_j$, this gives $\eta(x_i, x_j)=\delta_{i,j}$; which shows that $\xi^i_i\in \Z^2(\fk_3,\k)^{\k^{\s_3}}$, $i\in X$.
When $b\in\BB$ has degree two, say $b=x_jx_k$, $j,k\in X$, this equation reads $\delta_{e,t}\eta(x_i,b)=\delta_{g_i^{-1}t,g_jg_k}\eta(x_i, b)$. Thus $\eta(x_i,b)=0$ as 
$g_jg_k\neq g_i$ for any $i,j,k\in X$. 

Next, it readily follows that $\eta(x_i,b)=0$ when $b=x_ix_jx_i$, $j\neq i\in X$ or  $b=x_0x_1x_0x_2$, by the left identity in \eqref{eqn:conds-hoch} and relation $x_i^2=0$. 

We end the proof by checking that $\xi_{3}\in \Z^2(\fk_3,\k)^{\k^{\s_3}}$, as the equation becomes: 
\begin{align*}
	\delta_{e,t}\eta(x_i, x_jx_kx_j)=\delta_{g_i^{-1}t,g_jg_kg_j}\eta(x_i, x_jx_kx_j)
\end{align*}
which is tautological, since $g_i=g_jg_kg_j$ and $g_i^2=e$, for  different $i,j,k\in X$.
\epf

\subsection{Exponentials}
In this part we compute all cocycles $ \eta \in \Z^2(\fk_3,\k)^{\k^{\s_3}}$ such that $e^{\eta}$ is multiplicative.
Recall that we can assume $\eta(1,1)=0$. Thus, there are scalars $(\eta_i)_{0\leq i\leq 3}\in\k$ such that 
\begin{align}\label{eq:eta-fk3-COdiagonal}
	\eta=\eta_0\xi_0^0+\eta_1\xi_1^1+\eta_2\xi_2^2+\eta_{3}\xi_3.
\end{align}

First, $e^{\eta}\in Z^2(\fk_3)$ when $\eta$ verifies \eqref{eqn:conm1}.
We rapidly characterize such cocycles.

\begin{lemma} If 
	$\eta\in \Z^2(\fk_3,\k)^{\k^{\s_3}}$ satisfies  \eqref{eqn:conm1}, then
	\[
	\eta\in\C\coloneqq\{\eta_{i}\xi_i^i,\, \eta_3\xi_3:\eta_i,\eta_3\in\k,\,i\in X\}.
	\]
\end{lemma}
Once again, the converse follows, but the case $\eta=\xi_3$ can be checked only after a long and tedious computation. The case $\eta=\eta_i\xi_i^i$  follows using \cite[Lemma 2.3]{GaM}.
\pf
Let $\eta$ be such that it verifies \eqref{eqn:conm1}. Now, let us fix $x\in\fk_3$, $y=x_ix_j$ and $z=x_k$, with $i,j,k\in X$ all different. Then first identity becomes
\[
\eta(x,x_j)\eta(x_k,x_k)=0
\]
and thus $\eta_j\eta_k=0$ and $\eta_3\eta_k=0$, $i\neq k\in X$.
Therefore, $\eta\in \C$.
\epf

We can now fully determine the Hopf cocycles that arise as exponentials.
We shall write $\sigma_{(0,\eta_i,0)}$ for the cocycle $\sigma_{(c_0,c_1,c_2)}$ with a single nonzero index, given by the value $\eta_i$, in position $i\in\{1,2,3\}$.

Once again we get a set $\overline{\C}$ with $\C\subsetneq\overline{\C}$: see Remark \ref{rem:exp-copointed} for analysis. 

\begin{proposition}\label{pro:COfk3-converse}
	Fix $\eta\in \Z^2(\fk_3,\k)^{\k^{\s_3}}$. Then	$e^\eta\in Z^2(\fk_3)$  if and only if 
	\begin{align*}
		\eta\in \overline{\C}\coloneqq\{\eta_i\xi_i^i+\eta_3\xi_3 :\eta_i,\eta_3\in\k,\,i\in X\}.
	\end{align*}
\end{proposition}

As before, we extract a technical lemma from the proof, for clarity.
\begin{lemma}\label{lem:tecnical-c}
	Let $\eta\in\bar{\C}$. If $a,b\in\mathbb{B}^+$, then 
	\begin{align*}
		e^{\eta}(a,b)=\begin{cases}
			\eta(a,b), & \text{if }\deg(a)+\deg(b)< 6\\
			\frac{1}{2}\eta^{\ast 2}(a,b), & \text{if }\deg(a)+\deg(b)\geq  6.
		\end{cases}
	\end{align*}
	Moreover, if $\eta=\eta_i\xi_i^i+\eta_3\xi_3$, $i\in X$, then  it is given by the table:
	\begin{center}
		\begin{table}[H]
			\resizebox{12cm}{!}
			{\begin{tabular}{|c|c|c|c|c|c|c|c|c|c|c|c|c|c|}
					\hline
					$e^{\eta}$  & $x_0$ & $x_1$ & $x_2$ & $x_0x_1$ & $x_0x_2$& $x_1x_0$& $x_1x_2$ & $x_0x_1x_0$ & $x_0x_1x_2$ & $x_1x_0x_2$ & $x_0x_1x_0x_2$\\
					\hline
					$x_0$  & $\delta_{i,0}\eta_0$ &  &  &  &  &  &  &  &  & $\eta_3$ & \\
					\hline
					$x_1$  &  & $\delta_{i,1}\eta_1$ &  &  &  & &  &  & $\eta_3$ &  & \\
					\hline
					$x_2$  &  &  & $\delta_{i,2}\eta_2$ &  &  & &  & $-\eta_3$ &  &  & \\
					\hline
					$x_0x_1$  &  &  &  &  & $\eta_3$ &  &  &  &  &  & \\
					\hline
					$x_0x_2$  &  &  &  & $\eta_3$ &  & & $-\eta_3$ &  &  &  & \\
					\hline
					$x_1x_0$  &  &  &  &  &  & & $\eta_3$ &  &  &  & \\
					\hline
					$x_1x_2$  &  &  &  &  & $-\eta_3$ & $\eta_3$ &  &  &  &  & \\
					\hline
					$x_0x_1x_0$  &  &  & $\eta_3$ &  &  & &  &  &  &  & \\
					\hline
					$x_0x_1x_2$  &  & $-\eta_3$ &  &  &  & &  &  &  &  & \\
					\hline
					$x_1x_0x_2$  & $-\eta_3$ &  &  &  &  & &  &  &  &  & \\
					\hline
					$x_0x_1x_0x_2$  &  &  &  &  &  & &  &  &  &  & $-\eta_3^2$\\
					\hline
			\end{tabular}}
		\end{table}
	\end{center}
\end{lemma}	
\pf
The first paragraphs in the proof of Lemma \ref{lem:tecnical-p} apply and we just need to deal with $e^{\eta}(a,b)$, for $a,b\in \mathbb{B}$ with $\deg a + \deg b\in\{6,8\}$; as in {\it loc.cit.}~we have that $e^{\eta}(a,b)=\eta(a,b)$ otherwise.

Now, let $a=x_lx_mx_l$, $b=x_ix_jx_i\in \fk_3$, with $i, j, k\in X$ and $l,m,n\in X$ all different respectively. As in the pointed case, we show $\eta^{\ast3}(a,b)=0$. In this case, we observe that in the development of $\Delta^2(a\ot b)$ is impossible obtain a pure tensor of the form $x_t^{\ot 6}$, $t\in X$.

When $a=b=x_0x_1x_0x_2$, we get $\eta^{\ast4}(a,b)=0$  by a similar argument. On the  other hand, notice that in the development of $\eta^{\ast3}(a,b)$ each term has a factor of the form $\eta_i\eta_j$ or $\eta(x_i,x_j)$, $i\neq j\in X$, which is zero. Therefore for  $\eta^{\ast3}(a,b)=0$.

We use \gap\, to calculate $\eta^{\ast 2}$ and end the proof, see \texttt{eta2-copointed.g}.
\epf

We can now show Proposition \ref{pro:COfk3-converse}.
\pf
Let $\eta\in \Z^2(\fk_3,\k)^{\k^{\s_3}}$ be such that $e^\eta$ is a Hopf 2-cocycle, $\eta$ as in \eqref{eq:eta-fk3-COdiagonal}.
By definition of the exponential, we get that $e^\eta(x_jx_i,x_ix_j)=\eta_i\eta_j+\eta_j\eta_k$. On the other hand, the decomposition formula \eqref{eqn:dec-form2} leads to $e^\eta(x_jx_i,x_ix_j)=\eta_i\eta_j$. Hence, $\eta_j\eta_k=0$, for $j\neq k\in X$; that is $\eta\in \overline{\C}$.

Conversely, let $\eta\in\overline{\C}$; say $\eta=\eta_i\xi_i^i+\eta_3\xi_3$, $i\in X$. 
We start by pointing out that the case $\eta_3=0$ is clear, as we get $\sigma_{(0,\eta_i,0)}=e^{\eta_i\xi_i^i}$, by comparing tables. 
Next we look at the orbit $\alpha_d\ra\sigma_{(0,\eta_i,0)}$ as in Theorem \ref{cocicloCOdiagonal}: we have that $\alpha_{-\eta_3}\ra\sigma_{(0,\eta_i,0)}$ coincides with  $e^{\eta}$, hence $e^\eta\in Z^2(\fk_3)$.
\epf

\begin{remark}\label{rem:exp-copointed}
	Although the inclusion $\C\subset \overline{\C}$ is strict, these sets coincide up to coboundaries, as $\overline{\C}=\C+\k\{\xi_3\}$ in $\Z^2(\fk_3,\k)$.
\end{remark}

Next theorem characterizes the cocycles $\sigma$ which are pure.

\begin{theorem}\label{thm:CO-fk3-sigma-pure}
	Fix $\cc=(c_0,c_1,c_2)$ and let $\sigma_{\cc}$ be a 2-cocycle as in Theorem \ref{cocicloCOdiagonal}. Then the following are equivalent:
	\begin{enumerate}[label=(\alph*)]
		\item $\sigma_{\cc}\sim e^\eta$ for some $\eta\in \Z^2(\fk_3,\k)^{\k^{\s_3}}$.
		\item There is a unique no null $c_i\in\{c_0,c_1,c_2\}$.
	\end{enumerate}
\end{theorem}
\pf
{\it (a)} $\Rightarrow${\it (b)}. Let $d\in\k$ and $\eta\in \Z^2(\fk_3,\k)^{\k^{\s_3}}$ be such that $\sigma_{\cc}=\alpha_d\ra e^\eta$. Hence $e^\eta=\alpha_{-d}\ra \sigma_{\cc}$ is a Hopf cocycle and consequently $\eta=\eta_{i}\xi_i^i+\eta_3\xi_3\in\overline{\C}$, for some $i\in X$. By Theorem \ref{cocicloCOdiagonal}, if follows that $c_j=0$ if $j\neq i$ and $c_i=\eta_i$. As well, we get that $d=\eta_3$.

{\it (b)} $\Rightarrow${\it (a)}. By looking at the tables in Theorem \ref{cocicloCOdiagonal} and the proof of Proposition \ref{pro:COfk3-converse}, we see that in this case $\sigma_{\cc´~}=e^{c_i\xi_i^i}$.
\epf

Next remark contributes to the motivation for Question \ref{q}, {\it cf.}~Remark \ref{rem:question-p}.
\begin{remark}\label{rem:question-c}
	We have that $\ex(\C)=\ex(\overline{\C})$ in $H^2(\fk_3)$. Indeed, let us fix $e^\eta\in \ex(\overline{\C})\subset Z^2(\fk_3)$. Hence $\eta=\eta_i\xi_i^i+\eta_3\xi_3 \in \overline{\C}$, for some fixed $i\in\{0,1,2\}$ and  $\eta_i,\eta_3\in \k$. 
	It follows that $e^\eta=\alpha_{-\eta_3} \ra \sigma_{(0, \eta_i, 0)}=\alpha_{-\eta_3} \ra e^\xi$, and $\xi=  \eta_i\xi_i^i \in \C$.
\end{remark}

Now we look at these cocycles in the purely Hopf algebra setting. Recall the definition of the copointed Hopf algebras $A_{\bs\alpha}$ in \eqref{eqn:relations-copointed}, which classify copointed Hopf algebras over $\k^{\s_3}$. They all arise as cocycle deformations of the graded Hopf algebra $\fk_3\#\k^{\s_3}$. Next corollary shows that the cocycles  are generically pure.

\begin{corollary}\label{cor:copointed-exponential}
	$A_{\bs\alpha}$ is isomorphic to a nontrivial exponential cocycle deformation of $A=\fk_3\#\k^{\s_3}$ only if $[\bs\alpha]=[1:0]$.
\end{corollary}
\pf
Let $\sigma=e^\eta\#\eps$ be a Hopf cocycle given by the exponential of a Hochschild 2-cocycle $\eta$ on $\fk_3$. 
In particular, $\eta \in \overline{\C}$ by Proposition \ref{pro:COfk3-converse} and $e^\eta$ is cohomologous to either the trivial cocycle $\veps$ or one the following: $\sigma_{(c,0,0)}$, $\sigma_{(0,c,0)}$ or $\sigma_{(0,0,c)}$, by Theorem \ref{thm:CO-fk3-sigma-pure}.
As an example, if $e^\eta\sim\sigma_{(c,0,0)}$, then 
\[
A_\sigma\simeq A_{\sigma_{(c,0,0)\#\eps}}\stackrel{\eqref{eqn:alphas}}{\simeq} A_{(c,c)}\simeq A_{(1,1)}\stackrel{\eqref{eqn:alphas-action}}{\simeq} A_{(-1,0)}\simeq A_{[1:0]}.\]
As well, if $e^\eta\sim\sigma_{(0,c,0)}$ or $e^\eta\sim\sigma_{(0,0,c)}$, then  $A_\sigma\simeq A_{(0,1)}\stackrel{\eqref{eqn:alphas-action}}{\simeq} A_{[1:0]}$.
\epf

\end{document}